\newtheorem{thm}{Theorem}
\newtheorem{lem}[thm]{Lemma}
\newdefinition{rmk}[thm]{Remark}
\crefname{lem}{Lemma}{Lemmas}
\crefname{thm}{Theorem}{Theorems}
\crefname{rmk}{Remark}{Remarks}
\newproof{pf}{Proof}
\patchcmd{\emailauthor}{(#2)}{}{}{}
\begin{document}

\begin{frontmatter}
\title{On maximal autocorrelations of Rudin-Shapiro sequences}
\author{Daniel Tarnu}
\ead{daniel_tarnu@sfu.ca}
\address{Department of Mathematics, Simon Fraser University, 8888 University Drive, Burnaby, British Columbia V5A
1S6, Canada}
\begin{abstract}
    \indent In this paper, we present an alternative proof showing that the maximal aperiodic autocorrelation of the $m$-th Rudin-Shapiro sequence is of the same order as $\lambda^{m}$, where $\lambda$ is the real root of $x^{3} + x^{2} - 2x - 4$. This result was originally proven by Allouche, Choi, Denise, Erd\'elyi, and Saffari (2019) and Choi (2020) using a translation of the problem into linear algebra. Our approach simplifies this linear algebraic translation and provides another method of dealing with the computations given by Choi. Additionally, we prove an analogous result for the maximal periodic autocorrelation of the $m$-th Rudin-Shapiro sequence. We conclude with a discussion on the connection between the proofs given and joint spectral radius theory, as well as a couple of conjectures on which autocorrelations are maximal.
\end{abstract}
\begin{keyword} Shapiro polynomials \sep Golay-Rudin-Shapiro sequence \sep Autocorrelation \sep  Trigonometric polynomials \sep Joint spectral radius
\end{keyword}
\end{frontmatter}

\section{Introduction}

A length-$n$ binary sequence is defined to be an element of $\lbrace -1, 1 \rbrace^{n}$.  Consider a length-$n$ binary sequence sequence $\textbf{s} = (s_{0},s_{1},\dots,s_{n-1})$. We define the \emph{aperiodic autocorrelation of }\textbf{s}\emph{ at shift $k$} to be
\begin{equation*}
    \sum_{i=0}^{n-1} s_{i}s_{i+k}, 
\end{equation*} 
where $\textbf{s}_{i} = 0$ for $i \notin [0,n-1]$. Likewise, we can define the \emph{periodic autocorrelation of }$\textbf{s}$\emph{ at shift $k$} to be the same sum but taking $\textbf{s}_{i} = \textbf{s}_{i \text{ mod } n}$ for all $i \in \mathbb{Z}$. For instance, if we take $\textbf{s} = (-1,1,1,-1)$, we have
\begin{align*}
    \begin{minipage}{.5\textwidth}
        \centering
        \text{aperiodic autocorrelation of $\textbf{s}$ at shift 2:}
        \begin{tabular}{ccccccc}
            & & & & & & \\
            & -1 & 1 & 1 & -1 &  &  \\
            $\times$ &   &   & -1 & 1 & 1 & -1 \\
            \hline
            & & & -1 & -1 & & 
        \end{tabular}
        \\
        \vspace{0.25cm}
        (-1) + (-1) = -2,
    \end{minipage}
    \begin{minipage}{.5\textwidth}
        \centering
        \text{periodic autocorrelation of $\textbf{s}$ at shift 2:}
        \begin{tabular}{ccccc}
            & & & & \\
            & -1 & 1 & 1 & -1  \\
            $\times$ & 1  & -1  & -1 & 1 \\
            \hline
            & -1 & -1 & -1 & -1 
        \end{tabular}
        \\
        \vspace{0.25cm}
        (-1) + (-1) + (-1) + (-1) = -4,
    \end{minipage}
\end{align*}
where the multiplication is understood to be component-wise. Sometimes, aperiodic autocorrelation and periodic autocorrelation are referred to as \textit{acyclic} autocorrelation and \textit{cyclic} autocorrelation, respectively. Both notions of autocorrelation are used as measures of the similarity of a sequence to its translates. Of course, periodic autocorrelations can be expressed as the sum of two aperiodic autocorrelations. In some contexts (e.g. signal processing), it is desirable to find binary sequences with small autocorrelation at every shift. We will concentrate on studying the maximal aperiodic autocorrelation of Rudin-Shapiro sequences.
\\
\\
\indent We define the \emph{$m$-th Rudin-Shapiro sequence} $(a_{0},a_{1},\dots,a_{2^{m}-1})$ by
\[ a_{i} = (-1)^{\text{\# of pairs of consecutive ones in the binary expansion of $i$}}. \]
For example, we have
\begin{align*}
\text{1st Rudin-Shapiro sequence} &= (1,1), \\
\text{2nd Rudin-Shapiro sequence} &= (1,1,1,-1), \\
\text{3rd Rudin-Shapiro sequence} &= (1,1,1,-1,1,1,-1,1).
\end{align*}
Associated with the $m$-th Rudin-Shapiro sequence is the $m$-th Shapiro polynomial: 
\[ q_{m} := \sum_{j=0}^{2^{m}-1} a_{j}x^{j}, \]
which is an example of a Littlewood polynomial (i.e. a polynomial with coefficients in $\lbrace -1, 1 \rbrace$). Both objects are widely studied. It is worth noting that there are several popular definitions for the Rudin-Shapiro sequences which use recursion \cite{hoholdt, mauduit, allouche2}, some of which are formulated by first building the Shapiro polynomials and defining the Rudin-Shapiro sequences as the sequences of their coefficients \cite{allouche, katz}. We define
\begin{align*}
    C_{m}(k) &= \text{aperiodic autocorrelation of the $m$-th Rudin-Shapiro sequence at shift $k$}, \\
    P_{m}(k) &= \text{periodic autocorrelation of the $m$-th Rudin-Shapiro sequence at shift $k$}.
\end{align*}
Related to the study of autocorrelations of binary sequences is the notion of a merit factor. In \cite{golay}, Golay defines the \emph{merit factor} of a binary sequence $(s_{0},s_{1},\dots,s_{n-1})$ with aperiodic autocorrelations $C(k)$ as
\[ \frac{n^{2}}{2\sum_{k=1}^{n-1} C(k)^{2}}.\]
In \cite{hoholdt2}, it is shown that an equivalent definition is
\[ \frac{n^{2}}{  \left\| \sum_{j=0}^{n-1} s_{j}z^{j} \right\|_{4}^{4} - n^{2}},\]
where the norm used is the $L^{4}$ norm on the unit circle in $\mathbb{C}$, connecting the well-studied areas of autocorrelations of binary sequences and $L^{p}$ norms of Littlewood polynomials. The expected merit factor of a length-$n$ binary sequence is asymptotically 1 as $n \to \infty$ (see \cite{newman, borwein}). It is of interest to find sequences with large merit factor -- for a broad overview of the merit factor problem, see \cite{jedwab}. In \cite{hoholdt}, it is shown that the merit factor of the $m$-th Rudin-Shapiro sequence tends to 3 as $m \to \infty$. In fact, motivated by Littlewood's computation of $\| q_{m} \|_{4}$ given in \cite{littlewood}, it is shown in \cite{borwein} that the same holds for broader class of sequences constructed from so-called \emph{Rudin-Shapiro-like polynomials}.  It follows that
\begin{equation}\label{squarebound}
    \sum_{k=1}^{2^{m}} C_{m}(k)^{2} \sim \frac{4^{m}}{6}.
\end{equation}
So, the $\ell^{2}$ norm of the sequence of all $C_{m}(k)$ for fixed $m$ is established, and we will now focus on the $\ell^{\infty}$ norm. The main objective of this paper is to provide an alternative proof of the following:
\\
\begin{thm}\label{thm1} For all $m \in \mathbb{N}$, there exist $K_{1},K_{2} > 0$ such that
\[ K_{1}\lambda^{m} \hspace{0.1cm} \leq \hspace{0.1cm} \max_{k \neq 0} \hspace{0.1cm} |C_{m}(k)| \hspace{0.1cm} \leq \hspace{0.1cm} K_{2}\lambda^{m}, \]
where $\lambda = 1.659\cdots$ is the real root of $x^{3}+x^{2}-2x-4$.
\end{thm}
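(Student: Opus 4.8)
The plan is to translate the statement into a linear recursion for the correlation sequences attached to the Shapiro polynomials, to encode that recursion by a finite family of small transfer matrices, and to read off $\lambda$ from the dominant product. First I would set up notation: write $q_m$ for the $m$-th Shapiro polynomial and $q_m^{*}$ for its companion, normalised by $q_0=q_0^{*}=1$, $q_{m+1}=q_m+x^{2^{m}}q_m^{*}$, $q_{m+1}^{*}=q_m-x^{2^{m}}q_m^{*}$, so that the coefficient sequence of $q_m$ is the $m$-th Rudin--Shapiro sequence. Besides $C_m(k)$, introduce $C_m^{*}(k)$, the autocorrelation at shift $k$ of the coefficient sequence of $q_m^{*}$, and the cross-correlation $\Gamma_m(k)=\sum_i (q_m)_{i+k}(q_m^{*})_i$; all three vanish for $|k|\ge 2^{m}$, and $C_m,C_m^{*}$ are even. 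Expanding $q_{m+1}(x)q_{m+1}(1/x)$ and the analogous products gives, first, the Golay complementary identity $C_m(k)+C_m^{*}(k)=0$ for $k\neq 0$, and then, for $k\ge 1$,
\[ C_{m+1}(k)=\Gamma_m(2^{m}-k),\qquad \Gamma_{m+1}(\pm k)=2C_m(k)\pm\Gamma_m(2^{m}-k). \]
Eliminating $C_m$ via $C_m(k)=\Gamma_{m-1}(2^{m-1}-k)$ leaves a self-contained recursion for $\Gamma$ in which the shift is folded by $k\mapsto 2^{m}-k$; for $1\le k\le 2^{m-1}-1$ it reads, with $e=2^{m-1}-k$, $\bigl(\Gamma_{m+1}(k),\Gamma_{m+1}(-k)\bigr)=\bigl(2\Gamma_{m-1}(e)+\Gamma_{m-1}(-e),\,2\Gamma_{m-1}(e)-\Gamma_{m-1}(-e)\bigr)$, i.e.\ the fixed matrix with rows $(2,1),(2,-1)$ applied to $\bigl(\Gamma_{m-1}(e),\Gamma_{m-1}(-e)\bigr)$, and for the remaining ranges of $k$ one gets, after one further unfolding, finitely many other explicit small transfer matrices. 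These identities would be the preliminary lemmas.

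Next I would identify $\lambda$ and prove the lower bound together. Organising shifts by their binary expansions exhibits $\max_{k\neq 0}|\Gamma_m(k)|$, which equals $\max_{k\neq 0}|C_{m+1}(k)|$ by the first identity above, as a supremum over admissible length-$m$ products of the transfer matrices of the norm of that product applied to the fixed initial data (coming from $q_0,q_1$). The governing growth rate is realised by the periodic product: along the shifts $c_m=2^{m-1}-c_{m-1}$ (so $c_m=(2^{m}-(-1)^{m})/3\asymp 2^{m}/3$, which stays in the admissible range), the sequence $u_m:=\Gamma_m(c_m)$ satisfies $u_{m+1}=u_m+2u_{m-1}-4u_{m-2}$, whose characteristic polynomial $x^{3}-x^{2}-2x+4$ equals $-p(-x)$ for $p(x)=x^{3}+x^{2}-2x-4$; hence its dominant root is $-\lambda$. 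Since $C_m(c_m)=\Gamma_{m-1}(2^{m-1}-c_m)=\Gamma_{m-1}(c_{m-1})=u_{m-1}$, it suffices to show $|u_m|\ge K\lambda^{m}$ for all large $m$; writing $u_m=A(-\lambda)^{m}+R_m$ with $|R_m|=O(\mu^{m})$, $\mu=2/\sqrt{\lambda}<\lambda$, this follows once $A\neq 0$, a rational condition on $(u_0,u_1,u_2)=(1,1,3)$ that one checks directly (equivalently, $(1,1,3)$ is not annihilated by the quadratic cofactor of $x^{3}-x^{2}-2x+4$). Treating the finitely many small $m$ by hand and absorbing them into the constant yields $\max_{k\neq 0}|C_m(k)|\ge K_1\lambda^{m}$ for all $m$.

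The hard part is the upper bound, since there one must control \emph{every} $C_m(k)$, i.e.\ prove $\max_{k\neq 0}|\Gamma_m(k)|\le K_2\lambda^{m}$. The na\"ive estimate $|\Gamma_{m+1}(\pm k)|\le 2|C_m(k)|+|\Gamma_m(2^{m}-k)|$ only gives $\max_{k\neq 0}|\Gamma_m(k)|=O(2^{m})$, because it forgets that $2C_m(k)$ and $\Gamma_m(2^{m}-k)$ cannot both be near-extremal at the same shift; extracting this cancellation is the crux. The plan is to show that the finite family of transfer matrices admits a common invariant polytope after rescaling by $\lambda$ — equivalently, that no finite product of them has spectral radius exceeding $\lambda$ raised to its length — and then to run an extremal-norm induction: in a suitable polytope norm $\|\cdot\|_{*}$ one obtains $\bigl\|(\Gamma_m(k),\Gamma_m(-k))\bigr\|_{*}\le\lambda^{m}\|v_0\|_{*}$ uniformly in $k$, whence $|\Gamma_m(k)|\le K_2\lambda^{m}$ and $\max_{k\neq 0}|C_m(k)|=\max_{k\neq 0}|\Gamma_{m-1}(k)|\le K_2\lambda^{m}$. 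Exhibiting the invariant polytope reduces to an explicit finite computation with the few transfer matrices involved; this is the step that replaces and streamlines Choi's linear-algebra calculation, and it is precisely where the joint-spectral-radius viewpoint taken up in the concluding discussion appears.
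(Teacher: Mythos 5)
Your overall architecture --- fold the shift by $k\mapsto 2^{m}-k$, encode the H{\o}holdt--Jensen--Justesen-type recursions by a finite family of small transfer matrices, get the lower bound from the periodic shift sequence $c_m\asymp 2^{m}/3$ whose scalar recursion has characteristic polynomial $-p(-x)$, and get the upper bound by controlling all admissible matrix products --- is the same as the paper's, and your lower-bound sketch is essentially the paper's argument (the paper uses $\ell_m=\lfloor 2^{m+1}/3\rceil$, diagonalizes $AM$, and checks that the coefficient of the dominant eigenvalue $-\lambda$ is nonzero; your nonvanishing check on the initial data plays the same role and is fine once the constants are verified).

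The genuine gap is the upper bound, which you yourself identify as the crux and then do not prove. Your plan is to exhibit a common invariant polytope for the rescaled transfer matrices and run an extremal-norm induction, asserting that this ``reduces to an explicit finite computation.'' Three problems. First, the parenthetical equivalence you state is false: the existence of a polytope extremal norm is not equivalent to the condition that no finite product has spectral radius exceeding $\lambda^{\text{length}}$ (that condition is the statement $JSR\le\lambda$ via Berger--Wang, and it involves infinitely many products; a polytope extremal norm exists only under additional hypotheses on the family). Second, the finite computation is not specified and is not obviously finite a priori --- the invariant polytope algorithm need not terminate, and when run numerically (as the paper reports in Section 3: it halts in 8 steps with a 30-vertex polytope) it yields only a heuristic certificate unless the vertex inclusions are re-verified in exact arithmetic over $\mathbb{Q}(\lambda)$, which you do not do. Third, there is a concrete obstruction that any such argument must confront and that your sketch never surfaces: in the paper's normalization the single product $MAMB$ satisfies $\|MAMB\|>\lambda^{2}$ in the spectral norm, so no argument that bounds each short block by $\lambda^{\text{length}}$ in a fixed standard norm can close by bare submultiplicativity. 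The paper's actual resolution is Lemma 4: an explicit diagonalization of $MA$ is used to bound, entrywise via the Frobenius norm, products of \emph{two} blocks $(MA)^{j_1}(MB)^{k_1}(MA)^{j_2}(MB)^{k_2}$ by $0.970\,\lambda^{\sum(j_n+k_n)}$ for all but one configuration, with the exceptional case $(MAMB)^{2}$ checked separately; the slack $0.970<1/1.028$ then absorbs the lone bad factor. Some replacement for this pairing-and-cancellation step (or a rigorously certified extremal norm) is required; as written, your upper bound is a restatement of the problem rather than a proof.
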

For the rest of this paper, for any functions $f : \mathbb{N} \to \mathbb{R}$ and $g: \mathbb{N} \to \mathbb{R}$, we write 
\[ f(m) \ll g(m) \]
if $f(m) = O(g(m))$ for $m \in \mathbb{N}$. In \cite{katz}, Katz and van der Linden prove \cref{thm1} with the best possible $K_{2}$ (being $5/\lambda^{4}$, which is approximately 0.660) using the language of algebraic number theory. \cref{thm1} was originally proven through \cite{allouche, choi}, in which the first step was translating the problem of showing $\max_{k \neq 0} |C_{m}(k)| \ll \lambda^{m}$ into the problem of showing 
\[ \max_{(a_{1},a_{2},\dots,a_{m})} \left\| \prod_{i=1}^{m} T_{a_{i}} \right\|_{2} \ll \lambda^{m} \]
for $(a_{1},a_{2},\dots,a_{m}) \in \lbrace 1,2,3,4 \rbrace^{m}$ and some $T_{a_{i}} \in \mathbb{Z}^{3 \times 3}$. Allouche et al. in \cite{allouche} reduce the number of matrices considered in the product, which led to showing 
\[ \max_{k \neq 0} |C_{m}(k)| \ll (1.00000100000025 \lambda)^{m} \]
which is very close to the desired result. Additionally, the lower bound of \cref{thm1} is proven in \cite{allouche}. Finally, in \cite{choi}, Choi uses these advances to establish the upper bound of \cref{thm1} with $K_{2} < 3.783$. We note that in \cite{choi}, Choi misinterprets their Theorem 1.1 as being for periodic autocorrelation when it actually concerns aperiodic autocorrelation. This result on $C_{m}(k)$ is used to establish results on the oscillation of the modulus of Shapiro polynomials on the unit circle (see \cite{erdelyi}). We use roughly the same ideas as in \cite{allouche, choi}, although constants are left implicit and the crux of our computations given in \cref{lma4} is simpler than those of the computations given in \cite{katz, choi}, so we have a more easily verifiable proof of \cref{thm1} at the expense of precision -- for explicit bounds, see \cite{katz}. We follow this with \cref{thm2}, an analogous result for $P_{m}(k)$, which we roughly state below:
\\
\\
\textbf{Theorem 12.} \textit{For $m \geq 3$, we have that $P_{m}(k)$ is either 0 or $4C_{m-2}(|2^{m-1}-k|)$ depending on $k$, and also that}
\[ \lambda^{m} \ll \max_{k \neq 0} |P_{m}(k)| \ll \lambda^{m}. \]
This is inspired by a private communication by B. Saffari, in which they state that for Rudin-Shapiro sequences, both their periodic autocorrelations and aperiodic autocorrelations behave the same. Afterwards, in Section 3, we discuss how the problem of proving Theorem 1 is the problem of finding the so-called joint spectral radius of $\lbrace MA, MB \rbrace$ which was introduced in \cite{rota}, and provide a heuristic proof of Theorem 1 using an algorithm introduced in \cite{protasov}. We conclude in Section 4 with a couple of conjectures on which $k$ gives maximal $C_{m}(k)$ for fixed $m$. We begin with some preliminary work for the proof of \cref{thm1}.

\section{Proof of the Theorem}

We begin with an overview of the proof of \cref{thm1}. First, we fix $m \in \mathbb{N}$ and $1 \leq k \leq 2^{m}$ and we consider a vector $v_{m} \in \mathbb{R}^{3}$ such that
\[ v_{m} = \begin{bmatrix}
C_{m}(k) \\
\vdots
\end{bmatrix}, \]
where all components depend on $m$ and $k$. Next, we derive a decomposition of $v_{m}$ as a matrix-vector product:
\[ v_{m} = \prod_{i=1}^{m} U^{\alpha_{i}}V^{\beta_{i}} \cdot v \]
for some $U,V \in \mathbb{Z}^{3 \times 3}$ and $(\alpha_{i},\beta_{i}) \in \lbrace (1,0),(0,1) \rbrace$ and $v \in \mathbb{R}^{3}$. The lower bound in \cref{thm1} is proven quickly by diagonalizing $U$. Finally, we prove the upper bound by showing that
\begin{equation}\label{JSR}
\max_{a_{i},b_{i}} \left\| \prod_{i=1}^{m} U^{\alpha_{i}}V^{\beta_{i}} \right\|_{2} \ll \lambda^{m}
\end{equation}
and using the fact that
\[ |C_{m}(k)| \leq \| v_{m} \|_{2} \ll \max_{a_{i},b_{i}} \left\| \prod_{i=1}^{m} U^{\alpha_{i}}V^{\beta_{i}} \right\|_{2}. \]
\\
\\
We now begin developing the actual proof of \cref{thm1}. The following lemma restricts our attention exclusively to autocorrelations with odd shifts.
\begin{lem}\label{evenk}
For all $m \in \mathbb{N}$ and even $k \in \mathbb{Z}$, we have that 
\[ C_{m}(k) = P_{m}(k) = 0. \]
\end{lem}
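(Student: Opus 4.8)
The plan is to exploit the well-known multiplicative structure of the Rudin-Shapiro sequence modulo 2. The key observation is that the number of pairs of consecutive ones in the binary expansion of $i$ has the same parity as a sum of products of consecutive binary digits of $i$; concretely, writing $i = \sum_{j \geq 0} \epsilon_j(i) 2^j$ with $\epsilon_j(i) \in \{0,1\}$, we have $a_i = (-1)^{\sum_j \epsilon_j(i)\epsilon_{j+1}(i)}$. From this, for any two indices $i$ and $i+k$ lying in $[0, 2^m-1]$, the product $a_i a_{i+k}$ is determined (mod 2 in the exponent) by the binary digits of $i$ and $i+k$, and when $k$ is even, the least significant bit plays a role that lets us pair up terms.

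First I would reduce to showing that $C_m(k) = 0$ for even $k$, since $P_m(k)$ is a sum of two aperiodic autocorrelations (as noted in the introduction: $P_m(k) = C_m(k) + C_m(k - 2^m)$, and $2^m$ is even so $k - 2^m$ is even whenever $k$ is), so the periodic case follows immediately from the aperiodic one. Next, write $k = 2\ell$. I would split the autocorrelation sum $C_m(2\ell) = \sum_i a_i a_{i+2\ell}$ according to the parity of $i$: terms with $i = 2t$ and terms with $i = 2t+1$. For $i = 2t$, both $i$ and $i + 2\ell = 2(t+\ell)$ are even, so their last binary digit is $0$; the recursion $a_{2n} = a_n$ (which follows directly from the digit characterization, since appending a $0$ at the bottom adds no new consecutive-ones pair) gives $a_{2t}a_{2t+2\ell} = a_t a_{t+\ell}$. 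For $i = 2t+1$, both $i$ and $i+2\ell = 2(t+\ell)+1$ are odd; here $a_{2n+1} = (-1)^{\epsilon_0(n)} a_n$ where $\epsilon_0(n)$ is the lowest bit of $n$ — appending a $1$ at the bottom creates a consecutive-ones pair exactly when the previous lowest bit was also $1$. So $a_{2t+1} a_{2(t+\ell)+1} = (-1)^{\epsilon_0(t) + \epsilon_0(t+\ell)} a_t a_{t+\ell}$.

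The crux is then to combine these two contributions. Summing over all $t$, the even-$i$ part contributes $\sum_t a_t a_{t+\ell}$ (ranging over $t$ with $2t, 2(t+\ell) \in [0,2^m-1]$, i.e. essentially $C_{m-1}(\ell)$), and the odd-$i$ part contributes $\sum_t (-1)^{\epsilon_0(t)+\epsilon_0(t+\ell)} a_t a_{t+\ell}$. If $\ell$ is even, then $\epsilon_0(t) = \epsilon_0(t+\ell)$ for every $t$, so the sign is always $+1$ and the odd-$i$ part equals the even-$i$ part; but then $C_m(2\ell) = 2 C_{m-1}(\ell)$ — wait, that is not zero, so I need to be more careful and instead induct or track the boundary terms precisely. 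The better route: if $\ell$ is odd, then $\epsilon_0(t) + \epsilon_0(t+\ell)$ is always odd, the sign is always $-1$, and the odd-$i$ contribution exactly cancels the even-$i$ contribution, giving $C_m(2\ell) = 0$. If $\ell$ is even, write $\ell = 2\ell'$ and iterate — that is, peel off factors of $2$ from $k$ until the remaining odd part forces the cancellation, using the scaling relation $C_m(2\ell) = 2C_{m-1}(\ell)$ carefully at the level of the full $[0,2^m-1]$ range (checking that no boundary terms are lost because $2t$ and $2t+1$ and their shifts stay inside the range together). I expect the main obstacle to be handling these boundary/range bookkeeping details cleanly — making sure the pairing of $i=2t$ with $i=2t+1$ is a genuine bijection on the relevant index set, and that the induction on the power of $2$ dividing $k$ terminates at an odd number strictly less than $2^m$ where the sign-cancellation argument applies. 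Once that is set up, the vanishing is immediate from the sign being identically $-1$.
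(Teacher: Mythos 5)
Your argument is correct, but it is genuinely different from what the paper does: the paper disposes of this lemma with a one-line citation to Theorem~2.1 of H{\o}holdt--Jensen--Justesen, whereas you give a self-contained elementary proof from the digit characterization $a_i = (-1)^{\sum_j \epsilon_j(i)\epsilon_{j+1}(i)}$. Your key steps all check out: the recursions $a_{2n}=a_n$ and $a_{2n+1}=(-1)^{\epsilon_0(n)}a_n$ are correct; the parity split of the index $i$ is a genuine bijection onto two copies of the same range $t\in[0,2^{m-1}-1]$ (with the same zero-padding convention on both halves), so the boundary bookkeeping you worried about is in fact clean and yields
\[ C_m(2\ell)=\sum_{t}\bigl(1+(-1)^{\epsilon_0(t)+\epsilon_0(t+\ell)}\bigr)a_t a_{t+\ell}, \]
which vanishes termwise for $\ell$ odd and equals $2C_{m-1}(\ell)$ for $\ell$ even; and the induction on the $2$-adic valuation of $k$ terminates because $0<k<2^m$ forces the valuation to be at most $m-1$. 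The reduction of $P_m$ to $C_m$ via $P_m(k)=C_m(k)+C_m(2^m-k)$ is also fine. Two small points: the lemma as literally stated includes $k\equiv 0\pmod{2^m}$, where the conclusion fails ($C_m(0)=P_m(0)=2^m$), so both the paper and your proof implicitly assume $k\not\equiv 0$ --- your induction in fact \emph{requires} $k\neq 0$ so that an odd part exists; and you should state explicitly that $|k|\geq 2^m$ is trivial so that the interesting range is $0<|k|<2^m$. What your approach buys is independence from the cited reference and a transparent mechanism for the vanishing (exact cancellation between odd- and even-indexed terms); what the citation buys is brevity and consistency with the paper's reliance on the same reference for the recursions \eqref{one}--\eqref{four}.
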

\begin{proof}
This is Theorem 2.1 in \cite{hoholdt}.
\end{proof}
Now, we construct the aformentioned $v_{m}$. For $m \in \mathbb{N}$ and fixed $k_{m}$ with $0 \leq k_{m} \leq 2^{m}$ and $k_{m}$ odd, define
\begin{equation}\label{k}
\begin{aligned}
    k_{m}'  &= 2^{m}-k_{m}, \\
    k_{m-1} &= \begin{cases} k_{m} & \text{if $k_{m} \leq 2^{m-1}$} \\ k_{m}' & \text{else} \end{cases}.
\end{aligned}
\end{equation}
We wish to split $[0,2^{m}]$ into four equal-length subintervals. For $1 \leq n \leq 4$, define the open intervals
\begin{equation}\label{sdef}
    S_{m}^{n} = ((n-1)2^{m-2},n2^{m-2}).
\end{equation}
We turn our attention to
\[ v_{m}(k_{m}) := v_{m} := \begin{bmatrix} C_{m}(k_{m}) \\ C_{m}(k_{m}') \\ C_{m-1}(k_{m-1}) \end{bmatrix}. \]
\\
\begin{lem}\label{lma2} For $m \geq 3$
and 
\[ M = \begin{bmatrix} 0 & 1 & 2 \\ 0 & -1 & 2 \\ 1 & 0 & 0 \end{bmatrix}, \hspace{0.25cm} A = \begin{bmatrix} 0 & 1 & 0 \\ 1 & 0 & 0 \\ 0 & 0 & 1 \end{bmatrix}, \hspace{0.25cm} B = \begin{bmatrix} 1 & 0 & 0 \\ 0 & 1 & 0 \\ 0 & 0 & 0 \end{bmatrix}, \]
there exist $a_{i},b_{i},c \in \lbrace 0,1 \rbrace$ for $i = 3,\dots,m$ such that
\begin{align*}
    v_{m} = \left( \prod_{i=3}^{m} A^{a_{i}}MB^{b_{i}} \right) A^{c} \begin{bmatrix} 1 \\ -1 \\ 1 \end{bmatrix}.
\end{align*}
\end{lem}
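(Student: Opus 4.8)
The plan is to prove \cref{lma2} by iterating a single‑step recursion. Concretely, I would show that for every $m \ge 3$ one has $v_m = A^{a_m} M B^{b_m}\, v_{m-1}$, where $v_{m-1}=v_{m-1}(k_{m-1})$ is indexed by the value $k_{m-1}$ prescribed in \eqref{k} and the pair $(a_m,b_m)\in\{0,1\}^2$ depends only on which of the four intervals $S_m^1,\dots,S_m^4$ of \eqref{sdef} contains $k_m$. Composing the $m-2$ steps writes $v_m$ as $A^{a_m}MB^{b_m}\cdots A^{a_3}MB^{b_3}\,v_2$; a direct computation with the length‑$4$ and length‑$2$ Rudin--Shapiro sequences gives $v_2(1)=[1,-1,1]^{\top}$ and $v_2(3)=[-1,1,1]^{\top}=A\,[1,-1,1]^{\top}$, so $v_2=A^c[1,-1,1]^{\top}$ for the appropriate $c$, and a relabeling of the indices of the product puts everything into the stated form. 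The whole argument rests on a recursion for $C_m(k)$, $k$ odd, in terms of autocorrelations at levels $m-1$ and $m-2$.

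To establish that recursion I would pass to generating functions. Let $p_m$ denote the companion Shapiro polynomial, so $q_m = q_{m-1}+x^{2^{m-1}}p_{m-1}$, $p_m = q_{m-1}-x^{2^{m-1}}p_{m-1}$ with $q_0=p_0=1$; set $A_m(x)=q_m(x)q_m(x^{-1})=\sum_k C_m(k)x^k$, let $D_m(x)=p_m(x)p_m(x^{-1})=\sum_k D_m(k)x^k$, and let $E_m(x)=p_m(x)q_m(x^{-1})=\sum_k\Gamma_m(k)x^k$, so $\Gamma_m$ is the cross‑correlation of the coefficient strings of $q_m$ and $p_m$. Substituting the Shapiro recursion and invoking the Rudin--Shapiro identity $|q_m|^2+|p_m|^2=2^{m+1}$ on the unit circle (equivalently $C_m(k)+D_m(k)=0$ for $k\neq0$; see \cite{hoholdt}) to cancel the diagonal contributions gives
\[ A_m(x) = 2^{m} + x^{2^{m-1}}E_{m-1}(x) + x^{-2^{m-1}}E_{m-1}(x^{-1}), \qquad E_m(x) = \bigl(A_{m-1}(x)-D_{m-1}(x)\bigr) + x^{-2^{m-1}}E_{m-1}(x^{-1}) - x^{2^{m-1}}E_{m-1}(x). \]
Reading off the coefficient of $x^k$ in the first identity and using that $\Gamma_{m-1}$ is supported on $|k|\le 2^{m-1}-1$ gives $C_m(k)=\Gamma_{m-1}(k-2^{m-1})$ for $0<k<2^m$; feeding the second identity back in (with $C_{m-2}(k)-D_{m-2}(k)=2C_{m-2}(k)$ for $k\neq0$) and then re‑expressing the surviving $\Gamma_{m-2}$‑terms through $C_{m-1}$ by the first identity one level down yields, for odd $0<k<2^m$ and $m\ge3$,
\[ C_m(k) = 2\,C_{m-2}\bigl(k-2^{m-1}\bigr) + \operatorname{sgn}\bigl(2^{m-1}-k\bigr)\,C_{m-1}\bigl(k-2^{m-1}\bigr), \]
where each $C_j$ is extended to an even function vanishing outside $[-(2^j-1),\,2^j-1]$. (Alternatively one can derive this combinatorially: split the $m$‑th Rudin--Shapiro sequence into the coefficient string of $q_{m-1}$ followed by that of $p_{m-1}$, note the two halves contribute autocorrelations of opposite sign, and iterate on the straddling term.)

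With this recursion in hand, I would carry out the case analysis according to which $S_m^n$ contains $k_m$: since $k_m$ is odd it lies strictly inside exactly one of them, and $k_m\in S_m^1\Leftrightarrow k_m'\in S_m^4$, $k_m\in S_m^2\Leftrightarrow k_m'\in S_m^3$. Applying the recursion to both $k_m$ and $k_m'$ one finds $C_m(k_m)=\pm\,C_{m-1}(k_{m-1}')+2\,C_{m-2}(k_{m-2})$ and $C_m(k_m')=\mp\,C_{m-1}(k_{m-1}')+2\,C_{m-2}(k_{m-2})$, where the sign is $+/-$ when $k_m\in S_m^1\cup S_m^2$ and $-/+$ when $k_m\in S_m^3\cup S_m^4$, and the $C_{m-2}$‑term is present exactly when $k_m\in S_m^2\cup S_m^3$ (for $k_m\in S_m^1\cup S_m^4$ one has $|k_m-2^{m-1}|\ge 2^{m-2}$, which kills it); meanwhile the third coordinate of $v_m$, namely $C_{m-1}(k_{m-1})$, is exactly the first coordinate of $v_{m-1}$. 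The key bookkeeping point is that the definition \eqref{k} of $k_{m-1}$ (and of $k_{m-2}$ from $k_{m-1}$) makes the shifts $2^{m-1}-k_m$ and $k_m-2^{m-1}$ match the entries $k_{m-1}'$ and $k_{m-2}$ of $v_{m-1}$. Comparing with $M\,[x,y,z]^{\top}=[\,y+2z,\ -y+2z,\ x\,]^{\top}$, with the transposition of the first two coordinates effected by $A$ and the projection onto the first two coordinates effected by $B$, one reads off that $k_m$ lying in $S_m^1,S_m^2,S_m^3,S_m^4$ corresponds to $(a_m,b_m)=(0,1),(0,0),(1,0),(1,1)$, which completes the single‑step recursion and hence the proof.

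The step I expect to be the main obstacle is the bookkeeping in the last paragraph: for each of the four cases one must track which of $\{k_m,\,2^m-k_m\}$ equals $k_{m-1}$, which shift lands in the support of $C_{m-1}$ and which in that of $C_{m-2}$, and verify that \eqref{k} reproduces precisely the shift appearing in the $C_{m-2}$‑term — it is this matching that forces the particular entries of $M$ and the presence of the auxiliary third coordinate. A minor secondary point is handling the value $k=2^{m-1}$ (where $\operatorname{sgn}$ vanishes and the support reductions degenerate), but this is vacuous here since $k$ is assumed odd.
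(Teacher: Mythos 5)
Your proposal is correct, and the part of it that actually constitutes the proof of \cref{lma2} --- the single-step recursion $v_m = A^{a_m}MB^{b_m}v_{m-1}$, the case analysis over $S_m^1,\dots,S_m^4$, the dictionary sending these intervals to $(a_m,b_m)=(0,1),(0,0),(1,0),(1,1)$ (i.e.\ to $MB$, $M$, $AM$, $AMB$), the base case $v_2 = A^c[1,-1,1]^{\top}$, and the composition --- coincides with the paper's argument, including the signs $\pm$ on the $C_{m-1}$-term and the presence of the $2C_{m-2}$-term exactly for $k_m\in S_m^2\cup S_m^3$. Where you genuinely diverge is in the provenance of the four recursion relations \eqref{one}--\eqref{four}: the paper simply cites them as Theorem 2.2 of \cite{hoholdt}, whereas you re-derive them from the Shapiro polynomial recursion $q_m=q_{m-1}+x^{2^{m-1}}p_{m-1}$, $p_m=q_{m-1}-x^{2^{m-1}}p_{m-1}$ via the generating functions $A_m$, $D_m$, $E_m$ and the identity $C_m(k)+D_m(k)=0$ for $k\neq 0$; your unified formula $C_m(k)=2C_{m-2}(k-2^{m-1})+\operatorname{sgn}(2^{m-1}-k)\,C_{m-1}(k-2^{m-1})$ (with even extensions) does reproduce all four cases once the supports are tracked, as I checked. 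What your route buys is self-containedness and a compact one-line statement of the recursion; what it costs is the extra generating-function bookkeeping (in particular the support argument that kills $\Gamma_{m-2}(\pm(j+2^{m-2}))$ at the second iteration), which the paper avoids entirely by citation. Your identification of the bookkeeping in \eqref{k} as the delicate point is accurate --- that matching of $2^{m-1}-k_m$ and $k_m-2^{m-1}$ with $k_{m-1}'$ and $k_{m-2}$ is precisely what the paper's four-case computation verifies.
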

\begin{proof} Let $m \geq 3$ and $0 \leq k_{m} \leq 2^{m}$. H{\o}holdt, Jensen, and Justesen in Theorem 2.2 of \textbf{\cite{hoholdt}} showed, for $m \geq 3$, that
\begin{align}
    &C_{m}(k_{m}) = C_{m-1}(2^{m-1}-k_{m}) \hspace{0.15cm} &\text{if}\hspace{0.25cm} &k_{m} \in S_{m}^{1} \label{one} \\
    &C_{m}(k_{m}) = C_{m-1}(2^{m-1}-k_{m}) + 2C_{m-2}(2^{m-1}-k_{m}) \hspace{0.15cm} &\text{if} \hspace{0.25cm} &k_{m} \in S_{m}^{2} \label{two} \\
    &C_{m}(k_{m}) = -C_{m-1}(k_{m}-2^{m-1}) + 2C_{m-2}(k_{m}-2^{m-1}) \hspace{0.15cm} &\text{if} \hspace{0.25cm} &k_{m} \in S_{m}^{3} \label{three} \\
    &C_{m}(k_{m}) = -C_{m-1}(k_{m}-2^{m-1}) \hspace{0.15cm} &\text{if} \hspace{0.25cm} &k_{m} \in S_{m}^{4}. \label{four}
\end{align}
Let $k_{m} \in S_{m}^{1}$ as defined in \eqref{sdef}. We see that $k_{m-1} = k_{m}$, which implies $k_{m-1}' = 2^{m-1}-k_{m-1} = 2^{m-1}-k_{m}$. Using this along with the relations above, we get
\begin{align*}
    C_{m}(k_{m}) &= C_{m-1}(k_{m-1}'), && \text{using \eqref{one}}\\
    C_{m}(k_{m}') &= -C_{m-1}(k_{m-1}'). && \text{using \eqref{four}}
\end{align*}
Thus,
\[ v_{m} = \begin{bmatrix} 0 & 1 & 0 \\ 0 & -1 & 0 \\ 1 & 0 & 0 \end{bmatrix} \begin{bmatrix} C_{m-1}(k_{m-1}) \\ C_{m-1}(k_{m-1}') \\ C_{m-2}(k_{m-2}) \end{bmatrix} = \begin{bmatrix} 0 & 1 & 0 \\ 0 & -1 & 0 \\ 1 & 0 & 0 \end{bmatrix}v_{m-1}. \]
\\
Now, let $k_{m} \in S_{m}^{2}$. We see that $k_{m-1} = k_{m}$, so using \eqref{two} and \eqref{three} respectively, we get
\begin{align*}
    C_{m}(k_{m}) &= C_{m-1}(k_{m-1}')+2C_{m-2}(k_{m-1}') = C_{m-1}(k_{m-1}')+2C_{m-2}(k_{m-2}),\\
    C_{m}(k_{m}') &= -C_{m-1}(k_{m-1}')+2C_{m-2}(k_{m-1}') = -C_{m-1}(k_{m-1}')+2C_{m-2}(k_{m-2}).
\end{align*}
Thus,
\[ v_{m} = \begin{bmatrix} 0 & 1 & 2 \\ 0 & -1 & 2 \\ 1 & 0 & 0 \end{bmatrix} \begin{bmatrix} C_{m-1}(k_{m-1}) \\ C_{m-1}(k_{m-1}') \\ C_{m-2}(k_{m-2}) \end{bmatrix} = \begin{bmatrix} 0 & 1 & 2 \\ 0 & -1 & 2 \\ 1 & 0 & 0 \end{bmatrix}v_{m-1}. \]
\\
Now, let $k_{m} \in S_{m}^{3}$. We see that $k_{m-1} = k_{m}'$, so using \eqref{three} and \eqref{two} respectively, we get
\begin{align*}
    C_{m}(k_{m}) &= -C_{m-1}(k_{m-1}')+2C_{m-2}(k_{m-1}') = -C_{m-1}(k_{m-1}')+2C_{m-2}(k_{m-2}), \\
    C_{m}(k_{m}') &= C_{m-1}(k_{m-1}') + 2C_{m-2}(k_{m-1}') = C_{m-1}(k_{m-1}') + 2C_{m-2}(k_{m-2}).
\end{align*}
Thus,
\[ v_{m} = \begin{bmatrix} 0 & -1 & 2 \\ 0 & 1 & 2 \\ 1 & 0 & 0 \end{bmatrix} \begin{bmatrix} C_{m-1}(k_{m-1}) \\ C_{m-1}(k_{m-1}') \\ C_{m-2}(k_{m-2}) \end{bmatrix} = \begin{bmatrix} 0 & -1 & 2 \\ 0 & 1 & 2 \\ 1 & 0 & 0 \end{bmatrix}v_{m-1}. \]
\\
Now, let $k_{m} \in S_{m}^{4}$. We see that $k_{m-1} = k_{m}'$, so using \eqref{four} and \eqref{one} respectively, we get
\begin{align*}
    C_{m}(k_{m}) &= -C_{m-1}(k_{m-1}'),\\
    C_{m}(k_{m}') &= C_{m-1}(k_{m-1}').
\end{align*}
Thus,
\[ v_{m} = \begin{bmatrix} 0 & -1 & 0 \\ 0 & 1 & 0 \\ 1 & 0 & 0 \end{bmatrix} \begin{bmatrix} C_{m-1}(k_{m-1}) \\ C_{m-1}(k_{m-1}') \\ C_{m-2}(k_{m-2}) \end{bmatrix} = \begin{bmatrix} 0 & -1 & 0 \\ 0 & 1 & 0 \\ 1 & 0 & 0 \end{bmatrix}v_{m-1}. \]
\\
In summary, we have shown that
\begin{equation}\label{Tresult}
    v_{m} = Tv_{m-1}
\end{equation}
where
\begin{equation}\label{T}
T = \begin{cases} MB & k_{m} \in S_{m}^{1} \\ M & k_{m} \in S_{m}^{2} \\ AM & k_{m} \in S_{m}^{3} \\ AMB & k_{m} \in S_{m}^{4} \end{cases}.
\end{equation}
Finally, let
\[ v = \begin{bmatrix}
1 \\ -1 \\ 1
\end{bmatrix}. \]
We see that
\begin{equation}\label{init}
    v_{2} = \begin{cases} v & k_{3} \in S_{3}^{1} \cup S_{3}^{4} \\ Av & k_{3} \in S_{3}^{2} \cup S_{3}^{3}  \end{cases}.
\end{equation}
Inductively applying \eqref{Tresult}, we are done.
\end{proof}
\vspace{0.25cm}
\indent We may express $v_{m}$ more simply. We will express $v_{m}$ as a matrix-vector product depending only on the matrices $MA$ and $MB$.
\begin{lem}\label{lma3} For $m \geq 3$ and
\[ M = \begin{bmatrix} 0 & 1 & 2 \\ 0 & -1 & 2 \\ 1 & 0 & 0 \end{bmatrix}, \hspace{0.25cm} A = \begin{bmatrix} 0 & 1 & 0 \\ 1 & 0 & 0 \\ 0 & 0 & 1 \end{bmatrix}, \hspace{0.25cm} B = \begin{bmatrix} 1 & 0 & 0 \\ 0 & 1 & 0 \\ 0 & 0 & 0 \end{bmatrix}, \]
we have that
\[ v_{m} = A^{\delta} \left( \prod_{i=3}^{m} MA^{a_{i}}B^{b_{i}} \right) \begin{bmatrix} 1 \\ -1 \\ 1 \end{bmatrix} \]
where $\delta \in \mathbb{N}$ and $(a_{i},b_{i}) \in \lbrace (0,1),(1,0) \rbrace$. In other words, we may express $v_{m}$ as an initial vector multiplied by a product of $MA$ and $MB$.
\end{lem}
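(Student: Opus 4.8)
The plan is to take the factorization furnished by \cref{lma2},
\[ v_m = \left( \prod_{i=3}^{m} A^{a_i} M B^{b_i} \right) A^{c} \begin{bmatrix} 1 \\ -1 \\ 1 \end{bmatrix}, \]
and to transport all of the $A$-factors to the left-hand end. Writing $T_i := A^{a_i}MB^{b_i}$, so that $v_m = T_m T_{m-1}\cdots T_3 \cdot A^{c} v$ with $v = (1,-1,1)^{\top}$, the structural input I would establish is a \emph{complementarity} of consecutive blocks:
\[ a_{i-1} = 1 - b_i \quad (4 \le i \le m), \qquad c = 1 - b_3 . \]
This is not in the statement of \cref{lma2} but is visible from its proof: by \eqref{T}, $b_i = 1$ exactly when $k_i \in S_i^1 \cup S_i^4$, while (reading \eqref{T} at level $i-1$) $a_{i-1} = 1$ exactly when $k_{i-1} \in S_{i-1}^3 \cup S_{i-1}^4$. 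Thus the relation reduces to the elementary fact that the reduction \eqref{k} maps $S_i^1 \cup S_i^4$ into $[0, 2^{i-2}]$ and maps $S_i^2 \cup S_i^3$ into $[2^{i-2}, 2^{i-1}]$ — indeed if $k_i \in S_i^1$ then $k_{i-1} = k_i$, if $k_i \in S_i^4$ then $k_{i-1} = 2^i - k_i$, and in either case $k_{i-1} \in (0, 2^{i-2})$; symmetrically for $S_i^2 \cup S_i^3$ — together with the observation that $k_{i-1}$, being odd, avoids the (even) subdivision points of the level-$(i-1)$ partition, so that it really does land in one of the open intervals $S_{i-1}^n$. The case $i = 3$ is exactly \eqref{init}.

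Granting the relation, the endgame is routine: substitute it back, pull the leftmost factor $A^{a_m}$ out as $A^{\delta}$, and reassociate so that each copy of $M$ is grouped with the $B^{b_i}A^{a_{i-1}}$ that immediately follows it,
\[ v_m = A^{a_m}\,[MB^{b_m}A^{a_{m-1}}]\,[MB^{b_{m-1}}A^{a_{m-2}}]\cdots[MB^{b_4}A^{a_3}]\,[MB^{b_3}A^{c}]\,v . \]
Because $a_{i-1} = 1 - b_i$, the word $B^{b_i}A^{a_{i-1}}$ is literally $B$ when $b_i = 1$ and literally $A$ when $b_i = 0$ (it is never $I$, never $BA$), so each of the $m-2$ bracketed chunks equals $MA$ or $MB$; the final chunk is handled identically using $c = 1 - b_3$. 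Setting $\delta := a_m$ (which lies in $\{0,1\} \subseteq \mathbb{N}$; replace $A^{0}$ by $A^{2} = I$ if one wants $\delta \ge 1$) and relabeling the chunks by $i = 3, \dots, m$ yields precisely $v_m = A^{\delta}\bigl( \prod_{i=3}^{m} MA^{a_i}B^{b_i}\bigr) v$ with every $(a_i, b_i) \in \{(0,1),(1,0)\}$.

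The only genuine content is the complementarity relation; everything after it is reassociation. An alternative to the regrouping step is to march the $A$-factors leftward through the $B$-factors using $AB = BA$ (valid because $B = \mathrm{diag}(1,1,0)$ is symmetric in its first two coordinates), which lands one in the same factorization; but the relation $a_{i-1} = 1 - b_i$ is exactly what is needed to rule out a chunk collapsing to $M$ or to $MAB$, so it cannot be avoided. I expect the verification of that relation — chasing the interval $S_i^n$ containing $k_i$ through \eqref{k} down to the interval containing $k_{i-1}$ — to be the main (though routine) obstacle.
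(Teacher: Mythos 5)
Your proposal is correct and takes essentially the same route as the paper: your complementarity relation $a_{i-1}=1-b_{i}$ (and $c=1-b_{3}$) is exactly the content of the paper's list \eqref{Tpossibilities} of admissible consecutive products $T_{i}T_{i-1}$, obtained by the same chase of $k_{i-1}$ through the intervals $S_{i-1}^{n}$ via \eqref{k}. The only cosmetic difference is that the paper performs the regrouping by induction on $m$ rather than in a single reassociation.
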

\begin{proof} By \cref{lma2}, we have that
\[ v_{m} =  (T_{m}T_{m-1}\cdots T_{3}) A^{c} \begin{bmatrix} 1 \\ -1 \\ 1 \end{bmatrix} \]
where
\[ T_{i} \in \lbrace MB, M , AM, AMB \rbrace  \]
for all $i$. In the notation of \eqref{k} and \eqref{sdef}, if $k_{m} \in S_{m}^{1}$, then $k_{m-1} \in S_{m-1}^{1}$ or $k_{m-1} \in S_{m-1}^{2}$. Thus, if $T_{i} = MB$, then $T_{i-1} \in \lbrace MB, M \rbrace$ for all $3 < i \leq m$. Likewise, we consider $k_{m} \in S_{m}^{n}$ for $2 \leq n \leq 4$ and observe that
\begin{equation}\label{Tpossibilities}
    T_{i}T_{i-1} \in \lbrace MBMB, MBM, MAM, MAMB, AMAM, AMAMB, AMBMB, AMBM \rbrace 
\end{equation}
for all $3 < i \leq m$. We will now prove the statement of the lemma. We proceed by induction on $m$. The case $m=3$ follows from \eqref{T} and \eqref{init}. Assume the appropriate inductive hypothesis. Then, for $m \geq 4$, we use \Cref{lma2} and our inductive hypothesis to get
\begin{align*}
    v_{m} = A^{a}MB^{b} A^{\delta} \left( \prod_{i=3}^{m-1} MA^{a_{i}}B^{b_{i}} \right) \begin{bmatrix} 1 \\ -1 \\ 1 \end{bmatrix}
\end{align*}
with $a,b \in \lbrace 0, 1 \rbrace$. By \eqref{Tpossibilities}, we may only have
\[ A^{a}MB^{b}A^{\delta} \in \lbrace MB, MA, AMB, AMA \rbrace, \]
so $(b,\delta_{1}) \in \lbrace (0,1),(1,0) \rbrace$. Thus, the leftmost factor of our product is of the form $A^{a}MB$ or $A^{a}MA$ and we are done.
\end{proof}
\begin{rmk} Let
\[ S = \begin{bmatrix} 0 & 0 & 1 \\ 0 & 1 & 0 \\ 1 & 0 & 0 \end{bmatrix}. \]
The matrix $M_{1}$ in Lemma 3 of \cite{allouche} is our $SMAS$, and the matrix $B$ is our $SMBS$. Since $S$ is an isometry and $S^{2} = I$, the matrix products in \cite{allouche} and \cite{choi} are exactly the same as ours in norm. Due to this matrix similarity, we can pass to the calculations given in \cite{choi}. However, we will take a different approach to the computations through \cref{lma4} and \cref{lma5}. The nature of the proof of \cref{lma4} is asymptotic and will require a check of several base cases. We note that the number of base cases is small enough as to not require a computer. Throughout the rest of this paper, we denote $\| \cdot \|_{2}$ by $\| \cdot \|$.
\end{rmk}
\[ \]
\begin{lem}\label{lma4} For $j_{n}, k_{n} \in \mathbb{N}$, we have
\[ \left\| \prod_{n=1}^{2} (MA)^{j_{n}}(MB)^{k_{n}} \right\| \leq  \prod_{n=1}^{2} \lambda^{j_{n}+k_{n}}. \]
\end{lem}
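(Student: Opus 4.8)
The plan is to combine two structural facts. First, the matrices are
\[ MA = \begin{bmatrix} 1 & 0 & 2 \\ -1 & 0 & 2 \\ 0 & 1 & 0 \end{bmatrix}, \qquad MB = \begin{bmatrix} 0 & 1 & 0 \\ 0 & -1 & 0 \\ 1 & 0 & 0 \end{bmatrix}, \]
so $\|MB\| = \sqrt{2}$ and, more usefully, $(MB)^{k} = (-1)^{k} w\, e_{2}^{\mathsf T}$ for every $k \ge 2$, where $w = (-1,1,1)^{\mathsf T}$ and $e_{2}$ is the second standard basis vector. Thus every power $(MB)^{k}$ with $k \ge 2$ is a rank-one bottleneck: it collapses the whole matrix product into an outer product of one column and one row, turning the operator norm into a product of two vector norms. (I read $j_{n},k_{n}\ge1$ here; the leftover boundary runs of length one, or pure powers of $MA$, are handled separately in \cref{lma5}.) Second, the characteristic polynomial of $MA$ is $x^{3} - x^{2} - 2x + 4$, whose roots are $-\lambda$ together with a conjugate pair $\mu,\bar\mu$ of modulus $|\mu| = 2/\sqrt{\lambda}$; since $\lambda^{3} > 4$ these satisfy $|\mu| < \lambda$, so $MA$ is diagonalisable with a simple dominant eigenvalue.

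Writing $P$ for the rank-one (oblique) spectral projector of $MA$ onto the $(-\lambda)$-eigenspace — explicitly $P = x y^{\mathsf T}/(y^{\mathsf T} x)$ with $x = (2,\, \lambda + \lambda^{2},\, -1-\lambda)^{\mathsf T}$ and $y = (1,\, 1+\lambda,\, -\lambda - \lambda^{2})^{\mathsf T}$ — I would record
\[ (MA)^{j} = (-\lambda)^{j} P + E_{j}, \qquad \|E_{j}\| \le D_{0}\,|\mu|^{j}, \]
where $D_{0}$ is an explicit constant coming from the rotation–scaling action of $MA$ on its real invariant plane $\ker(y^{\mathsf T})$ (one may also use $\|I - P\| = \|P\|$). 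In other words $\lambda^{-j}(MA)^{j} \to (-1)^{j} P$ at geometric rate $(|\mu|/\lambda)^{j}$. From this I would extract a short list of atomic estimates, the workhorse being
\[ \|(MA)^{j} w\| \le \lambda^{j+2}\qquad (j \ge 1), \]
which holds because $\|P w\| = \lambda^{2}\|x\|/(y^{\mathsf T} x)$ sits well below $\lambda^{2}$, so the inequality is automatic once $|\mu|^{j}$ is small enough; only finitely many small $j$ then remain, and each is a one-line check (an iterated matrix–vector product followed by a norm comparison, e.g.\ $\|MA\,w\| = \sqrt{11} < \lambda^{3}$). A handful of companion estimates, for $\|(MA)^{j} e_{i}\|$, $\|(MA)^{j} MB\|$ and $\|e_{2}^{\mathsf T}(MA)^{j}\|$, would be proved identically.

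With these in hand I would split on the sizes of $k_{1},k_{2}$ in $(MA)^{j_{1}}(MB)^{k_{1}}(MA)^{j_{2}}(MB)^{k_{2}}$. If $k_{1},k_{2}\ge2$, both $MB$-blocks collapse and the norm of the product is $\|(MA)^{j_{1}} w\|\cdot\bigl|e_{2}^{\mathsf T}(MA)^{j_{2}} w\bigr| \le \|(MA)^{j_{1}} w\|\,\|(MA)^{j_{2}} w\| \le \lambda^{j_{1}+2}\lambda^{j_{2}+2} = \lambda^{j_{1}+j_{2}+4} \le \lambda^{j_{1}+k_{1}+j_{2}+k_{2}}$. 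If exactly one $k_{n}$ equals $1$, say $k_{1}=1$ and $k_{2}\ge2$, the product equals $\pm\bigl((MA)^{j_{1}} MB (MA)^{j_{2}} w\bigr) e_{2}^{\mathsf T}$, and I would bound $\|(MA)^{j_{1}} MB (MA)^{j_{2}} w\|$ with the spectral estimate together with $\|MB\| = \sqrt{2}$; the case $k_{2}=1$, $k_{1}\ge2$ follows by transposing. If $k_{1}=k_{2}=1$, I would use $\|(MA)^{j_{1}} MB (MA)^{j_{2}} MB\| \le \sqrt{2}\,\|(MA)^{j_{1}} MB (MA)^{j_{2}}\|$ and, via $MB = e_{3} e_{1}^{\mathsf T} + (1,-1,0)^{\mathsf T} e_{2}^{\mathsf T}$, write the middle factor as a sum of two outer products controlled by the atomic estimates. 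In every branch the accumulated constants fall strictly below $\lambda^{k_{1}+k_{2}}$ — with a comfortable margin whenever some $k_{n}\ge2$ (the constants are all below $1$ asymptotically while $\lambda^{2} > 2.75$), and for the one tight family $k_{1}=k_{2}=j_{1}=j_{2}=1$, i.e.\ the word $(MA\,MB)^{2}$, one checks directly that even its Frobenius norm $\sqrt{55}$ is less than $\lambda^{4}$.

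The step I expect to be the main obstacle is quantitative bookkeeping rather than any conceptual point: since $|\mu|/\lambda \approx 0.94$ is close to $1$, a crude estimate for $D_{0}$ would push the threshold beyond which the asymptotics take over to an unpleasantly large value, lengthening the list of base cases, so keeping that list short — small enough as to not require a computer — needs a reasonably sharp description of how $MA$ acts on its invariant $2$-plane. A lesser nuisance is the $k_{n}=1$ casework, where the clean rank-one collapse is unavailable and one must transport $MB$'s rank-two image through powers of $MA$; but this too reduces to the same finite menu of estimates $\|(MA)^{j} u\| \ll \lambda^{j}$ for a few explicit vectors $u$.
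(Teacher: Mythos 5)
Your plan is correct in substance and rests on the same engine as the paper's proof: the spectral decomposition of $MA$ with simple dominant eigenvalue $-\lambda$ and subdominant modulus $|\nu|=2/\sqrt{\lambda}\approx 0.936\,\lambda$, an asymptotic bound with a geometrically decaying transient, a finite list of base cases, and a separate direct check of the unique tight word $(MAMB)^{2}$ (your $\|(MAMB)^{2}\|_{F}=\sqrt{55}<\lambda^{4}$ is exactly the paper's closing step). Where you genuinely diverge is in the bookkeeping. The paper reduces to $1\le k\le 2$, writes $(MA)^{j}(MB)^{k}=(-\lambda)^{j}W$, bounds every entry of $W/\lambda$ via the explicit diagonalization, takes a Frobenius norm to get $\|(MA)^{j}(MB)^{k}\|\le 0.970\,\lambda^{j+k}$ for $j\ge 22$, verifies $2\le j\le 21$ by a \texttt{Sage} computation, and then handles the two-factor product by submultiplicativity (which is why it must track that $0.970<1/1.028$ against the lone bad factor $MAMB$). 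You instead exploit the rank-one collapse $(MB)^{k}=(-1)^{k}we_{2}^{\mathsf T}$ for $k\ge 2$ to turn the entire two-factor product into outer products, reducing everything to vector estimates such as $\|(MA)^{j}w\|\le\lambda^{j+2}$; this is cleaner and gives enormous margins when both $k_{n}\ge 2$, at the price of an extra layer of casework when some $k_{n}=1$, where the paper's entrywise bound handles all $k$ uniformly. The one place I would push back is the hope that the base-case list stays hand-checkable: the decisive ratio $|\nu|/\lambda\approx 0.936$ is the same in both arguments, and the paper, with comparable asymptotic margins, still needed $j\ge 22$ before the tail term was negligible and used a computer for $2\le j\le 21$. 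Your margins in the two-$w$ branch are large enough that few or no base cases may be needed there, but in the $k_{n}=1$ branches (where the relevant asymptotic constants, e.g.\ $\|P\,MB\|\approx 1.02$ against a target of $\lambda$) the cushion is thinner and a crude $D_{0}$ will likely leave you with a couple dozen small-$j$ verifications per branch --- a finite computation either way, but not obviously one that avoids the machine check the paper performs. Everything else (the eigenvectors $x,y$, the left/right collapse, the identification of $(j,k)=(1,1)$ as the only exceptional single factor) checks out.
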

\begin{proof} The $k_{n}$ we use for this lemma are not related to the $k_{m}$ described in \eqref{k}. Note that for $k_{n} \geq 2$, we have $(MB)^{k_{n}} = \pm (MB)^{2}$, so we may assume $1 \leq k_{n} \leq 2$ without loss of generality. We find that $(MA)^{j_{n}} = PDP^{-1}$ with
\begin{equation}\label{diag}
\begin{aligned}
    P &= \begin{bmatrix} 2-\lambda^{2} & 2-\overline{\nu}^{2} & 2-\nu^{2} \\ -\lambda & -\overline{\nu} & -\nu \\ 1 & 1 & 1 \end{bmatrix}, \hspace{1cm} \\
D &= \left( (-1)^{j_{n}} \begin{bmatrix} \lambda^{j_{n}} & 0 & 0 \\ 0 & \overline{\nu}^{j_{n}} & 0 \\ 0 & 0 & \nu^{j_{n}} \end{bmatrix} \right),
\\
P^{-1} &= \left( \frac{1}{\gamma} \begin{bmatrix}  (\nu - \overline{\nu}) & (\overline{\nu} - \nu)(\overline{\nu} + \nu) & (\overline{\nu} - \nu)(2+\overline{\nu} \nu) \\
     (\lambda - \nu) & (\nu - \lambda)(\nu + \lambda) & (\nu - \lambda)(2+\nu \lambda) \\
     (\overline{\nu} - \lambda) & (\lambda - \overline{\nu})(\lambda + \overline{\nu}) & (\lambda - \overline{\nu})(2+\lambda \overline{\nu}) \end{bmatrix} \right),
\end{aligned}
\end{equation}
where $\lambda = 1.659\cdots$ and $\nu = -1.329 \cdots - 0.802 \cdots i$ are roots of $x^{3}+x^{2}-2x-4$, and 
\[ \gamma = (\lambda - \overline{\nu})(\lambda - \nu)(\overline{\nu} - \nu) =  \sqrt{-236}. \]
Let 
\begin{align}\label{lambdadef}
    \lambda_{3i} = \lambda, \hspace{0.25cm} \lambda_{3i+1} = \overline{\nu}, \hspace{0.25cm} \lambda_{3i+2} = \nu
\end{align}
for all $i \in \mathbb{Z}$. Let $T_{ij}$ denote the $ij$-th entry of a matrix $T$ where $i,j \geq 1$. Let $\gamma P^{-1} = \left[ p_{ij} \right]$. We see that
\begin{equation}\label{p}
\begin{aligned}
    p_{i1} &:= \gamma(P^{-1})_{i1} = \lambda_{i+1} - \lambda_{i}, \\
    p_{i2} &:= \gamma(P^{-1})_{i2} = (\lambda_{i} - \lambda_{i+1})(\lambda_{i} + \lambda_{i+1}),\\
    p_{i3} &:= \gamma(P^{-1})_{i3} = (\lambda_{i} - \lambda_{i+1})(2+\lambda_{i}\lambda_{i+1})
\end{aligned}
\end{equation}
for $1 \leq i \leq 3$. Note also that 
\begin{equation}\label{MB^k}
    (MB)^{k_{n}} = \begin{bmatrix} 0 & -(-1)^{k_{n}} & 0 \\ 0 & (-1)^{k_{n}} & 0 \\ 2-k_{n} & k_{n}-1 & 0 \end{bmatrix}
\end{equation}
as $1 \leq k_{n} \leq 2$. For the rest of the proof, we use $j = j_{n}$ and $k = k_{n}$ for legibility. We use \eqref{diag}, \eqref{p}, and \eqref{MB^k} to see that
\begin{equation}\label{MA^jMB^k}
    \begin{aligned}
        (MA)^{j}(MB)^{k} = \frac{(-1)^{j}}{\gamma}\begin{bmatrix*}[l] (2-k) \sum (2\lambda_{i-1}^{j}-\lambda_{i-1}^{j+2})p_{i3} & \sum (2\lambda_{i-1}^{j}-\lambda_{i-1}^{j+2})((-1)^{k}(p_{i2}-p_{i1})+(k-1)p_{i3}) & 0 \\ (2-k) \sum -\lambda_{i-1}^{j+1}p_{i3} & \sum -\lambda_{i-1}^{j+1}((-1)^{k}(p_{i2}-p_{i1})+(k-1)p_{i3}) & 0 \\ (2-k) \sum \lambda_{i-1}^{j}p_{i3} & \sum \lambda_{i-1}^{j}((-1)^{k}(p_{i2}-p_{i1})+(k-1)p_{i3}) & 0  \end{bmatrix*},
    \end{aligned}
\end{equation}
where all summations are taken over $1 \leq i \leq 3$. We now consider a matrix $W(j,k) := W \in \mathbb{Z}^{3 \times 3}$ defined by
\[ (MA)^{j}(MB)^{k} = (-\lambda)^{j}W. \]
We will show that $\| (1/\lambda)W \| \leq 1$ for $j \geq 22$ so that
\[ \| (MA)^{j}(MB)^{k} \| = \| (-\lambda)^{j}W \| \leq \lambda^{j+1} \leq \lambda^{j+k}   \]
for $j \geq 22$, and afterwards we will take care of the other cases. We will rely on the Frobenius norm $\| \cdot \|_{F}$ and make use of the well-known fact that
\[ \| W \| \leq \| W \|_{F} = \left( \sum_{i,j} |W_{ij}|^{2} \right)^{1/2}. \]
We use \eqref{lambdadef} and \eqref{MA^jMB^k} to get that
\begin{align*}
     |\gamma W_{32}| &= \left| \sum_{i=1}^{3} \left( \frac{\lambda_{i-1}}{\lambda} \right)^{j}((-1)^{k}(p_{i2}-p_{i1})+(k-1)p_{i3}) \right| \\
    &= \left| 2\text{Im}(\overline{\nu})((-1)^{k}(2\text{Re}(\nu)+1) + (k-1)(2+|\nu|^{2})) + 2\text{Im}\left( \left(\frac{\overline{\nu}}{\lambda}\right)^{j}(\nu - \lambda)((-1)^{k}(\nu + \lambda + 1) + (k-1)(2+\nu \lambda)) \right) \right| \\
    &\leq \left| 2\text{Im}(\nu)(2\text{Re}(\nu)+|\nu|^{2}+3) \right| + \left| 2\left(\frac{\overline{\nu}}{\lambda}\right)^{j}(\nu - \lambda)((-1)^{k}(\nu + \lambda + 1) + (k-1)(2+\nu \lambda)) \right|.
\end{align*}
We go through the same calculations for the rest of the entries of $W$ and find that
\begin{align*}
    |\gamma W_{11}| &\leq \left|2(2-\lambda^{2})\text{Im}(\nu)(2+|\nu|^{2})\right| + \left| 2\left(\frac{\overline{\nu}}{\lambda}\right)^{j}(2-\overline{\nu}^{2})(\nu-\lambda)(2+\nu \lambda) \right|, \\
    |\gamma W_{21}| &\leq \left|2\lambda\text{Im}(\nu)(2+|\nu|^{2})\right|+\left| 2\left(\frac{\overline{\nu}}{\lambda}\right)^{j}\overline{\nu}(\nu-\lambda)(2+\nu \lambda)  \right|, \\
    |\gamma W_{31}| &\leq \left|2\text{Im}(\nu)(2+|\nu|^{2})\right|+ \left| 2\left(\frac{\overline{\nu}}{\lambda}\right)^{j}(\nu-\lambda)(2+\nu \lambda)  \right|, \\
    |\gamma W_{12}| &\leq \left|2(2-\lambda^{2})\text{Im}(\nu)(2\text{Re}(\nu)+|\nu|^{2}+3)\right| + \left| 2\left(\frac{\overline{\nu}}{\lambda}\right)^{j} (2-\overline{\nu}^{2})(\nu - \lambda)((-1)^{k}(\nu + \lambda + 1) + (k-1)(2+\nu \lambda)) \right|, \\
    |\gamma W_{22}| &\leq \left|2\lambda\text{Im}(\nu)(2\text{Re}(\nu)+|\nu|^{2}+3)\right| + \left| 2 \left(\frac{\overline{\nu}}{\lambda}\right)^{j} \overline{\nu} (\nu - \lambda)((-1)^{k}(\nu + \lambda + 1) + (k-1)(2+\nu \lambda)) \right|, \\
    |\gamma W_{32}| &\leq \left|2\text{Im}(\nu)(2\text{Re}(\nu)+|\nu|^{2}+3)\right| + \left| 2\left(\frac{\overline{\nu}}{\lambda}\right)^{j}(\nu - \lambda)((-1)^{k}(\nu + \lambda + 1) + (k-1)(2+\nu \lambda)) \right| .
\end{align*}
\\
We will focus again on bounding $|\gamma W_{32}|$. Note that when $k=2$, we have
\[ \left| (\nu - \lambda)(\nu + \lambda + 1 + (2+\nu \lambda)) \right| = 7.460\cdots \]
and when $k=1$, we have
\[ \left| (\nu - \lambda)(\nu + \lambda + 1) \right| = 4.804\cdots \]
With this, we use rational approximations to get
\[ \left| 2\left(\frac{\overline{\nu}}{\lambda}\right)^{j}(\nu - \lambda)(\nu + \lambda - 1 + (k-1)(\nu \lambda)) \right| \leq 2\left| \frac{\nu}{\lambda} \right|^{j} \cdot (7.461) \leq (0.936)^{j}(14.922). \]
Thus, we achieve the bound
\[ |\gamma W_{32}| \leq \left| 2\text{Im}(\nu)(2\text{Re}(\nu)+|\nu|^{2}+3) \right| + (0.936)^{j}(14.922) \leq 4.416 + (0.936)^{j}(14.922). \]
Similarly, we get bounds for the rest of the $|\gamma W_{ij}|$ above and divide by $|\gamma|$ and $\lambda$ to get
\begin{align*}
    |W_{11}/\lambda| &\leq 0.210 + (0.936)^{j}(0.755)\\
    |W_{21}/\lambda| &\leq 0.462 + (0.936)^{j}(0.509)\\
    |W_{31}/\lambda| &\leq 0.278 + (0.936)^{j}(0.328)\\
    |W_{12}/\lambda| &\leq 0.131 + (0.936)^{j}(1.352)\\
    |W_{22}/\lambda| &\leq 0.288 + (0.936)^{j}(0.910) \\
    |W_{32}/\lambda| &\leq 0.174 + (0.936)^{j}(0.586).
\end{align*}
We find that $\| (1/\lambda)W \|_{F} = \left( \sum_{i,j} (W_{ij}/\lambda)^{2} \right)^{1/2} \leq 0.970 < 1$ if $j \geq 22$. This gives us that
\[ \| (MA)^{j}(MB)^{k} \| = \| (-\lambda)^{j}W \| \leq \| (-\lambda)^{j}W \|_{F} \leq \lambda^{j+1} \leq 0.970 \cdot \lambda^{j+k} \]
for $j \geq 22$ and $1 \leq k \leq 2$. A computation using \texttt{Sage} improves this and shows that $\| (MA)^{j}(MB)^{k} \| \leq 0.970 \cdot \lambda^{j+k}$ holds for all $2 \leq j \leq 21$ and $1 \leq k \leq 2$ and for $(j,k) = (1,2)$. The only exception is when $(j,k) = (1,1)$, where
\[ \lambda^{2} < \left\| MAMB \right\| \leq 1.028\lambda^{2}. \]
Since $0.970 < \frac{1}{1.028}$, we use the submultiplicativity of $\| \cdot \|$ to get that\
\begin{align}\label{five}
    \left\| \prod_{n=1}^{2} (MA)^{j_{n}}(MB)^{k_{n}} \right\| \leq \prod_{n=1}^{2} \lambda^{j_{n}+k_{n}}
\end{align}
when $j_{n}, k_{n} \geq 1$ and when only one of $(j_{1},k_{1})$ or $(j_{2},k_{2})$ is equal to $(1,1)$. It is readily seen that
\[ \left\| (MAMB)^{2} \right\| \leq \lambda^{4} \]
and so we conclude that \eqref{five} holds for all $j_{n}, k_{n} \geq 1$.
\end{proof}
\begin{rmk}\label{rmk2} Very similar to our diagonalization of $MA$ are the following diagonalizations of $M$ and $AM$:
\begin{equation}\label{morediag}
\begin{aligned}
    M &= \begin{bmatrix} \lambda & \nu & \overline{\nu} \\ \lambda^{2} - 2 & \nu^{2} - 2 & \overline{\nu}^{2} - 2 \\ 1 & 1 & 1 \end{bmatrix}
    \begin{bmatrix} \lambda & 0 & 0 \\ 0 & \nu & 0 \\ 0 & 0 & \overline{\nu} \end{bmatrix}
    \left( \frac{1}{\gamma} \begin{bmatrix} (\overline{\nu}-\nu)(\nu + \overline{\nu}) & (\nu-\overline{\nu})  & (\nu-\overline{\nu})(2+\nu \overline{\nu}) \\ 
    (\lambda - \overline{\nu})(\lambda + \overline{\nu}) & (\overline{\nu}-\lambda) & (\overline{\nu}-\lambda)(2+\lambda \overline{\nu}) \\
    (\nu - \lambda)(\lambda + \nu) & (\lambda - \nu) & (\lambda - \nu)(2+\lambda \nu) \end{bmatrix} \right),
    \\
    AM &= \begin{bmatrix} -\lambda & -\overline{\nu} & -\nu \\ 2-\lambda^{2} & 2-\overline{\nu}^{2} & 2-\nu^{2} \\ 1 & 1 & 1 \end{bmatrix} 
    \begin{bmatrix} -\lambda & 0 & 0 \\ 0 & -\overline{\nu} & 0 \\ 0 & 0 & -\nu \end{bmatrix}
    \left( \frac{1}{\gamma} \begin{bmatrix} (\nu - \overline{\nu})(\nu + \overline{\nu}) & (\overline{\nu}-\nu)  & (\nu-\overline{\nu})(2+\nu \overline{\nu}) \\ 
    (\lambda - \overline{\nu})(\lambda + \overline{\nu}) & (\overline{\nu}-\lambda) & (\lambda - \overline{\nu})(2+\lambda \overline{\nu}) \\
    (\nu - \lambda)(\lambda + \nu) & (\lambda - \nu) & (\nu - \lambda)(2+\lambda \nu) \end{bmatrix} \right).
\end{aligned}
\end{equation}
\end{rmk}
\begin{lem}\label{lma5} For $j,k \in \mathbb{N} \cup \lbrace 0 \rbrace$, we have
\[ \left\| (MA)^{j}(MB)^{k} \right\| \ll  \lambda^{j+k}, \]
where the implicit constant does not depend on $j$ or $k$.
\end{lem}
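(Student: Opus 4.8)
The plan is to reduce \cref{lma5} to three regimes --- $j,k \ge 1$; $k=0$; and $j=0$ --- and dispatch each. The regime $j,k\ge 1$ requires no new work: the proof of \cref{lma4} already produces an absolute constant $c$ (one may take $c=1.028$) with $\|(MA)^j(MB)^k\| \le c\,\lambda^{j+k}$ for $j\ge 1$ and $1\le k\le 2$, and since $(MB)^k = \pm(MB)^2$ for $k\ge 2$ (as noted at the start of that proof), the case $k\ge 3$ follows from the case $k=2$ at the cost of the harmless factor $\lambda^{2-k}\le 1$. Hence $\|(MA)^j(MB)^k\| \ll \lambda^{j+k}$ uniformly over $j,k\ge 1$.

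For the regime $k=0$, where $(MA)^j(MB)^0 = (MA)^j$, I would use the diagonalization $MA = P\,(-\mathrm{diag}(\lambda,\overline\nu,\nu))\,P^{-1}$ recorded in \eqref{diag}, so that $(MA)^j = (-1)^j P\,\mathrm{diag}(\lambda^j,\overline\nu^j,\nu^j)\,P^{-1}$ and therefore
\[ \|(MA)^j\| \;\le\; \|P\|\,\|P^{-1}\|\cdot\max\{\lambda,|\nu|,|\overline\nu|\}^{\,j}. \]
The one point that needs checking is that $\lambda$ is the spectral radius of $MA$, i.e. $\lambda > |\nu|$; this is immediate from $\lambda|\nu|^2 = 4$ (the product of the three roots of $x^{3}+x^{2}-2x-4$), which gives $|\nu| = 2/\sqrt\lambda = 1.55\cdots < \lambda$. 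Thus $\|(MA)^j\| \le (\|P\|\,\|P^{-1}\|)\,\lambda^j$ with $\|P\|\,\|P^{-1}\|$ independent of $j$, and this also handles $j=0$ since $\|P\|\,\|P^{-1}\|\ge \|I\|=1$.

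Finally, for $j=0$ we have $(MA)^0(MB)^k = (MB)^k$, which by the computation in the proof of \cref{lma4} lies in the finite set $\{I, MB, (MB)^2, -(MB)^2\}$ as $k$ ranges over $\mathbb{N}\cup\{0\}$; hence $\|(MB)^k\|$ is bounded by an absolute constant, trivially $\ll \lambda^k$ because $\lambda > 1$. Taking the implicit constant in \cref{lma5} to be the largest of the finitely many constants produced above makes the bound hold uniformly in $j$ and $k$. I anticipate no genuine obstacle here: all the analytic content --- the single-block estimate coming out of \cref{lma4}, the eigenvalue inequality $\lambda > |\nu|$, and the eventual $2$-periodicity up to sign of the powers of $MB$ --- is already in hand, and what remains is only the bookkeeping needed to merge the three regimes under a single constant.
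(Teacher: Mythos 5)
Your proof is correct and follows essentially the same route as the paper's: reduce to $k\le 2$ via $(MB)^k=\pm(MB)^2$, and use the diagonalization of $MA$ from \cref{lma4} to get $\|(MA)^j\|\ll\lambda^j$, with submultiplicativity absorbing the finitely many matrices $(MB)^k$. Your explicit verification that $\lambda>|\nu|$ (via $\lambda|\nu|^2=4$) and your separate treatment of the edge cases $j=0$ and $k=0$ are details the paper leaves implicit, but they do not change the argument.
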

\begin{proof} Again, note that for $k \geq 2$, we have $(MB)^{k} = \pm (MB)^{2}$, so we may assume $k \leq 2$ without loss of generality. Using the diagonalization found for $MA$ in \cref{lma4}, we have that $\| (MA)^{j} \| \ll \lambda^{j}$ and this result follows immediately.
\end{proof}
\noindent The following lemma is used solely for proving the lower bound in \cref{thm1}.
\\
\begin{lem}\label{lma6} Fix $m \in \mathbb{N}$. If $m$ is odd, then
\[ \left\lfloor \frac{2^{m+1}}{3} \right\rfloor = 2^{m+1} - \left\lceil \frac{2^{m+2}}{3} \right\rceil. \]
Similarly, if $m$ is even, then
\[ \left\lceil \frac{2^{m+1}}{3} \right\rceil = 2^{m+1} - \left\lfloor \frac{2^{m+2}}{3} \right\rfloor. \]
\end{lem}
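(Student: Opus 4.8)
The plan is to reduce both identities to a single statement about floors and ceilings of complementary fractions, using only the elementary congruence $2^{n}\equiv(-1)^{n}\pmod 3$.

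The key observation is that $2^{m+1}+2^{m+2}=3\cdot 2^{m+1}$, so $\tfrac{2^{m+1}}{3}+\tfrac{2^{m+2}}{3}=2^{m+1}\in\mathbb{Z}$. Since $2^{m+1}\equiv(-1)^{m+1}\not\equiv 0\pmod 3$, neither summand is an integer, so their fractional parts lie in $(0,1)$ and sum to an integer, hence to exactly $1$. For any real $x,y$ with $x,y\notin\mathbb{Z}$ and $x+y\in\mathbb{Z}$ one then checks directly from $\lfloor x\rfloor=x-\{x\}$ and $\lceil y\rceil=y+1-\{y\}$ that $\lfloor x\rfloor+\lceil y\rceil=x+y$, and symmetrically $\lceil x\rceil+\lfloor y\rfloor=x+y$. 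Applying this with $x=2^{m+1}/3$ and $y=2^{m+2}/3$ gives
\[ \Big\lfloor \tfrac{2^{m+1}}{3}\Big\rfloor+\Big\lceil \tfrac{2^{m+2}}{3}\Big\rceil \;=\; 2^{m+1} \;=\; \Big\lceil \tfrac{2^{m+1}}{3}\Big\rceil+\Big\lfloor \tfrac{2^{m+2}}{3}\Big\rfloor, \]
which contains both claimed equalities at once (in fact for every $m$, not only in the stated parity cases). So I would first record the one-line fact about $\lfloor x\rfloor+\lceil y\rceil$ when $x+y\in\mathbb{Z}$, then note $3\nmid 2^{m+1}$, and conclude.

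Alternatively, staying closer to the way the lemma is phrased, one can split on the parity of $m$. If $m$ is odd then $m+1$ is even, so $2^{m+1}\equiv 1\pmod 3$; write $2^{m+1}=3a+1$, so $\lfloor 2^{m+1}/3\rfloor=a$, while $2^{m+2}=6a+2=3(2a)+2$ gives $\lceil 2^{m+2}/3\rceil=2a+1$, whence $2^{m+1}-\lceil 2^{m+2}/3\rceil=(3a+1)-(2a+1)=a$. If $m$ is even, write $2^{m+1}=3b+2$, so $\lceil 2^{m+1}/3\rceil=b+1$, while $2^{m+2}=6b+4=3(2b+1)+1$ gives $\lfloor 2^{m+2}/3\rfloor=2b+1$, whence $2^{m+1}-\lfloor 2^{m+2}/3\rfloor=(3b+2)-(2b+1)=b+1$.

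There is essentially no obstacle here; the only thing requiring care is bookkeeping of which of floor/ceiling appears on which side in each parity case, and correctly propagating the remainder (namely $1$ for an even exponent and $2$ for an odd one) when passing from $2^{m+1}$ to $2^{m+2}$.
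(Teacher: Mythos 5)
Your proposal is correct and takes essentially the same route as the paper: the paper also writes $\lfloor x\rfloor = x - \{x\}$ and $\lceil y\rceil = y+1-\{y\}$ and uses that the two fractional parts cancel against the $+1$ because $\tfrac{2^{m+1}}{3}+\tfrac{2^{m+2}}{3}=2^{m+1}$. If anything, your version is slightly more complete, since you justify explicitly that the fractional parts sum to exactly $1$ (via $3\nmid 2^{m+1}$), a step the paper leaves implicit, and you observe that the parity split is unnecessary.
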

\begin{proof}
Suppose $m \in \mathbb{N}$ is odd. For $x \in \mathbb{R}$, let $\lbrace x \rbrace$ denote the fractional part of $x$. Then,
\begin{align*}
    \left\lfloor \frac{2^{m+1}}{3} \right\rfloor + \left\lceil \frac{2^{m+2}}{3} \right\rceil &= \frac{2^{m+1}}{3} - \left\lbrace \frac{2^{m+1}}{3} \right\rbrace + \frac{2^{m+2}}{3} + 1 - \left\lbrace \frac{2^{m+2}}{3} \right\rbrace \\
    &= \frac{2^{m+1}+2^{m+2}}{3} \\
    &= 2^{m+1}
\end{align*}
and so we are done in this case. The proof of the lemma follows similarly for even $m$.
\end{proof}
\vspace{0.25cm}
\begin{rmk}\label{rmk3} For $x \in \mathbb{R}$ with fractional part $\lbrace x \rbrace \neq 1/2$, denote by $\lfloor x \rceil$ the nearest integer to $x$. In the notation of \eqref{k} and \eqref{sdef}, if we pick $k_{m} = \left\lfloor \frac{2^{m+1}}{3} \right\rceil$, then we have that $k_{m} \in  (3\cdot2^{m-2},2^{m}) = S^{m}_{3}$. \cref{lma6} tells us that 
\begin{align*}
    k_{m-1} = k_{m}' &= \left\lfloor \frac{2^{m}}{3} \right\rceil \in S_{3}^{m-1} \\
    k_{m-2} = k_{m-1}' &= \left\lfloor \frac{2^{m-1}}{3} \right\rceil \in S_{3}^{m-2} \\
    &\vdots \\
    k_{3} = k_{4}' &= 5 \in S_{3}^{3}
\end{align*}
so that by \cref{lma2} we have
\[ v_{m} = \begin{bmatrix}
C_{m}(k_{m}) \\
C_{m}(k_{m}') \\
C_{m-1}(k_{m-1})
\end{bmatrix} = (AM)^{m-3} \begin{bmatrix}
-1 \\ 1 \\ -1
\end{bmatrix}. \]
\end{rmk}
We are now ready to prove the main theorem, which we recite below.
\\
\\
\textbf{Theorem 1.} \textit{For all $m \in \mathbb{N}$, there exist $K_{1},K_{2} > 0$ such that
\[ K_{1}\lambda^{m} \hspace{0.1cm} \leq \hspace{0.1cm} \max_{k \neq 0} \hspace{0.1cm} |C_{m}(k)| \hspace{0.1cm} \leq \hspace{0.1cm} K_{2}\lambda^{m}, \]
where $\lambda = 1.659\cdots$ is the real root of $x^{3}+x^{2}-2x-4$.}
\begin{proof} A quick computation shows that $\max_{k \neq 0} |C_{m}(k)| \neq 0$ for $1 \leq m \leq 2$. Fix $m \geq 3$. First, we focus on the upper bound. For even $k$, we have that $C(m,k) = 0$ by \cref{evenk}. Using the notation in \eqref{k}, Fix $k_{m}$ so that $0 \leq k_{m} \leq 2^{m}$ and $k_{m}$ is odd, and let
\[ v_{m} = \begin{bmatrix} C_{m}(k_{m}) \\ C_{m}(k_{m}') \\ C_{m-1}(k_{m-1}) \end{bmatrix}. \]
The idea is to use the fact that
\[ |C_{m}(k_{m})| \leq \|v_{m}\|.\]
By \cref{lma3}, we have that
\[ v_{m}  = A^{\delta} \left( \prod_{i=3}^{m} MA^{a_{i}}B^{b_{i}} \right) \begin{bmatrix} 1 \\ -1 \\ 1 \end{bmatrix} \]
where $\delta \in \mathbb{N}$ and $(a_{i},b_{i}) \in \lbrace (0,1), (1,0) \rbrace$. We see that
\[ \left\| A^{\delta} \left( \prod_{i=3}^{m} MA^{a_{i}}B^{b_{i}} \right) \begin{bmatrix} 1 \\ -1 \\ 1 \end{bmatrix} \right\| \ll  \left\| A^{\delta} \left( \prod_{i=3}^{m} MA^{a_{i}}B^{b_{i}} \right) \right\| = \left\| \prod_{i=3}^{m} MA^{a_{i}}B^{b_{i}} \right\|. \]
Note that $\| MBv \| \leq \| MAv \|$ for all $v \in \mathbb{R}^{3}$, so we assume that $(a_{m},b_{m}) = (1,0)$ without loss of generality. With this assumption, we have that
\[ \prod_{i=3}^{m} MA^{a_{i}}B^{b_{i}} = \left( \prod_{i=1}^{n} (MA)^{\alpha_{i}}(MB)^{\beta_{i}} \right) (MA)^{\ell} \]
where $\alpha_{i},\beta_{i} \in \mathbb{N} \cup \lbrace 0 \rbrace$ and $\ell + \sum_{i=1}^{n} \alpha_{i} + \beta_{i} = m-3$. We use \cref{lma4} and \cref{lma5} to conclude that
\[ |C_{m}(k_{m})| \leq \| v_{m} \| \ll \left\| \prod_{i=3}^{m} MA^{a_{i}}B^{b_{i}} \right\| \ll \left\| \prod_{i=1}^{n} (MA)^{\alpha_{i}}(MB)^{\beta_{i}} \right\|\cdot \lambda^{\ell} \ll \lambda^{\ell}\prod_{i=1}^{n} \lambda^{\alpha_{i} + \beta_{i}} \ll \lambda^{m} \]
where the implicit constants are independent of $m,n$.
\\
\\
Now, we concentrate on the lower bound. For this, we use the same idea as the authors in \cite{allouche}; namely, we exhibit $C_{m}(\ell_{m})$ for a specific $\ell_{m}$ such that $|C_{m}(\ell_{m})| \gg \lambda^{m}$. Let 
\[ \ell_{m} = \left\lfloor \frac{2^{m+1}}{3} \right\rceil \]
where $\left\lfloor x \right\rceil$ denotes the nearest integer to $x$. By \cref{lma2} and \cref{lma6} (see \cref{rmk3}), we have that
\[ C_{m}(\ell_{m}) = \begin{bmatrix} 1 & 0 & 0 \end{bmatrix} (AM)^{m-2} \begin{bmatrix} -1 \\ 1 \\ 1 \end{bmatrix}. \]
Using this equation and the diagonalization of $AM$ given in \eqref{morediag}, we find that there exist some $a,b,c \in \mathbb{C}$ so that
\begin{align*}
    C(m,\ell_{m}) = a(-\lambda)^{m-1} + b(-\overline{\nu})^{m-1} + c(-\nu)^{m-1}
\end{align*}
where $\nu = -1.329\cdots-0.802\cdots i$ is a root of $x^{3}+x^{2}-2x-4$. Using the diagonalization of $AM$ given in $\eqref{morediag}$, we find that
\[ a = \frac{-2\text{Re}(\nu)(2\text{Re}(\nu)+|\nu|^{2}-1)}{\gamma} \neq 0. \]
We also have that $C_{m}(\ell_{m}) \neq 0$ as $\ell_{m}$ is odd. In other words,
\begin{equation*} a(-\lambda)^{m-1}+b(-\overline{\nu})^{m-1} + c(-\nu)^{m-1} \neq 0,
\end{equation*}
so
\begin{equation}\label{lower}
C(m,\ell_{m}) = |a(-\lambda)^{m-2} + b(-\overline{\nu})^{m-2} + c(-\nu)^{m-2}| \gg \lambda^{m}.
\end{equation}
This concludes the proof.
\end{proof}
\begin{rmk} In  Theorem 2.2 of \cite{hoholdt}, the authors provide relations \eqref{one}-\eqref{four} used in our \cref{lma2} for a class of sequences that is more general than the class of Rudin-Shapiro sequences. In particular, they consider the class of sequences $(a_{0}, a_{1}, \dots, a_{2^{m}-1})$ with
\begin{align*}
    a_{0} &= 1, \\
    a_{2^{i}+j} &= (-1)^{j+f(i)}a^{2^{i}-j-1}, && 0 \leq j \leq 2^{i}-1, \\
    & && 0 \leq i \leq m-1,
\end{align*}
for all $m \in \mathbb{N}$ and $f : \mathbb{N} \to \lbrace 0 , 1 \rbrace$ being an arbitrary function. This class of sequences is closely related to Welti codes \cite{welti, welti2}. The Rudin-Shapiro sequences are recovered by choosing $f$ so that $f(0) = f(2k-1) = 0$ and $f(2k) = 1$ for all $k \in \mathbb{N}$. They show the aforementioned relations but the right-hand side of each is multiplied by $(-1)^{f(m-1)+f(m-2)}$. Since these relations are the same as the relations in \cref{lma2} up to a factor of -1, we may obtain an analogue of \cref{lma2} with the matrices $\pm M$, $A$, and $B$. In norm, products of these matrices are the same as products of $M$, $A$, and $B$. Thus, we may obtain \cref{thm1} for autocorrelations of this more general family of sequences in exactly the same manner as we have done above for Rudin-Shapiro sequences.
\end{rmk}

As remarked in the introduction to this paper, B. Saffari mentioned in a private communication that $P_{m}(k)$ and $C_{m}(k)$ behave essentially in the same way. On this topic, we show that Theorem 1 holds analogously for $P_{m}(k)$, and we give a useful relation between the periodic and aperiodic autocorrelations.

\begin{thm}\label{thm2}
Using the notation of \eqref{sdef}, we have for $m \geq 3$ that
\begin{equation}\label{P}
    P_{m}(k) = \begin{cases} 0 & k \in S_{m}^{1} \cup S_{m}^{4}, \\ 4C_{m-2}(|2^{m-1}-k|) & k \in S_{m}^{2} \cup S_{m}^{3}. \end{cases}
\end{equation}
Additionally, for all $m \in \mathbb{N}$, we have that
\begin{equation}\label{Pmax}
    \lambda^{m} \ll \max_{k \neq 0} |P_{m}(k)| \ll \lambda^{m}
\end{equation}
where the implicit constants do not depend on $m$, and $\lambda = 1.659\cdots$ is the real root of $x^{3}+x^{2}-2x-4$.
\end{thm}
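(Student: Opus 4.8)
The plan is to establish the identity \eqref{P} first and then obtain the bounds \eqref{Pmax} from it essentially for free using \cref{thm1}. For \eqref{P} the key input is the elementary observation recalled in the introduction that a periodic autocorrelation is a sum of two aperiodic ones: for any $m \geq 3$ and any $0 < k < 2^{m}$ (odd or not),
\[ P_{m}(k) \;=\; C_{m}(k) + C_{m}(2^{m}-k) \;=\; C_{m}(k_{m}) + C_{m}(k_{m}'), \]
so that $P_{m}(k)$ is the sum of the first two coordinates of the vector $v_{m}(k_{m})$ studied in \cref{lma2}. The four cases worked out inside the proof of \cref{lma2} then immediately give the result: for $k \in S_{m}^{1}$ one has $C_{m}(k_{m}) = C_{m-1}(k_{m-1}')$ and $C_{m}(k_{m}') = -C_{m-1}(k_{m-1}')$, whose sum vanishes, and the same cancellation occurs for $k \in S_{m}^{4}$; for $k \in S_{m}^{2}$ and for $k \in S_{m}^{3}$ the two expressions are $\pm C_{m-1}(k_{m-1}') + 2 C_{m-2}(k_{m-2})$, whose sum is $4 C_{m-2}(k_{m-2})$. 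It then remains to identify $k_{m-2}$ in these last two cases: unwinding \eqref{k} shows that $k_{m-1} = k$ with $k > 2^{m-2}$ when $k \in S_{m}^{2}$, hence $k_{m-2} = k_{m-1}' = 2^{m-1} - k$, and $k_{m-1} = k_{m}' = 2^{m}-k$ with $k_{m-1} > 2^{m-2}$ when $k \in S_{m}^{3}$, hence $k_{m-2} = k_{m-1}' = k - 2^{m-1}$; both equal $|2^{m-1}-k|$. This is \eqref{P}. (The shifts $2^{m-2}, 2^{m-1}, 3\cdot 2^{m-2}$ lie in no $S_{m}^{n}$, but they are even, so \cref{evenk} gives $P_{m}=0$ there, in harmony with \eqref{P} since $|2^{m-1}-k|$ is then even.)

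Given \eqref{P}, note that $|2^{m-1}-k| \in (0, 2^{m-2})$ whenever $k \in S_{m}^{2} \cup S_{m}^{3}$, so the argument of $C_{m-2}$ is always a legal nonzero shift. The upper bound in \eqref{Pmax} is then immediate: for $m \geq 3$,
\[ \max_{k \neq 0}|P_{m}(k)| \;\leq\; 4 \max_{j \neq 0}|C_{m-2}(j)| \;\ll\; \lambda^{m-2} \;\ll\; \lambda^{m} \]
by \cref{thm1} applied at the index $m-2$. For the lower bound I would reuse the explicit shift from the proof of \cref{thm1}: for $m \geq 5$ put $\ell = \lfloor 2^{m-1}/3 \rceil$, so that $0 < \ell < 2^{m-2}$ (because $2^{m-1}/3 < 2^{m-2}$), and take $k = 2^{m-1} + \ell \in S_{m}^{3}$, for which $|2^{m-1}-k| = \ell$. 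Then \eqref{P} gives
\[ \max_{k' \neq 0}|P_{m}(k')| \;\geq\; |P_{m}(k)| \;=\; 4\,|C_{m-2}(\ell)| \;\gg\; \lambda^{m-2} \;\gg\; \lambda^{m}, \]
the middle estimate being precisely \eqref{lower} applied at index $m-2 \geq 3$. The remaining values $m \leq 4$ are finite checks (for instance $P_{3}(5) = 4 C_{1}(1) = 4$ and $P_{4}(11) = 4 C_{2}(3) = -4$), and are in any case irrelevant to the $O$-statement \eqref{Pmax}.

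I do not expect a genuine obstacle here: once \eqref{P} is in place, \eqref{Pmax} is little more than a corollary of \cref{thm1}. The only care needed is bookkeeping — tracking the index reduction $k \mapsto |2^{m-1}-k|$ and confirming it stays strictly inside $(0, 2^{m-2})$, both in the general derivation of \eqref{P} and for the particular $\ell$ used in the lower bound, and disposing of the handful of small $m$ for which the index $m-2$ falls below the range where the constructions behind \cref{thm1} apply verbatim.
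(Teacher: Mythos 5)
Your proposal is correct and follows essentially the same route as the paper: \eqref{P} is derived by summing the two aperiodic autocorrelations $C_{m}(k)+C_{m}(2^{m}-k)$ and applying the recursions \eqref{one}--\eqref{four} case by case, and \eqref{Pmax} then follows from \cref{thm1} for the upper bound and from the explicit shift near $2^{m\pm 1}/3$ together with \eqref{lower} for the lower bound. The only cosmetic difference is that you place the witness shift in $S_{m}^{3}$ while the paper uses $\lfloor 2^{m}/3\rceil \in S_{m}^{2}$; both reduce to the same quantity $4C_{m-2}(\lfloor 2^{m-1}/3\rceil)$.
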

\begin{proof}
First, we prove \eqref{P}. Fix $k \in S_{m}^{1}$. By \eqref{one} and \eqref{four}, we have that 
\[ P_{m}(k) = C_{m}(k) + C_{m}(2^{m}-k) = 0.\]
By symmetry of $P_{m}(k)$, the same holds for $k \in S_{m}^{4}$. Now, fix $k \in S_{m}^{2}$. We use \eqref{two} and \eqref{three} to get that
\begin{align*}
    P_{m}(k) &= C_{m}(k) + C_{m}(2^{m}-k) \\
    &= \left[ C_{m-1}(2^{m-1}-k) + 2C_{m-2}(2^{m-1}-k) \right] + \left[ -C_{m-1}(2^{m-1} - k_{m}) + 2C_{m-2}(2^{m-1}-k) \right] \\
    &= 4C_{m-2}(2^{m-1}-k).
\end{align*}
By symmetry of $P_{m}(k)$, the same holds for $k \in S_{m}^{3}$. Thus, we have shown \eqref{P}. Now, we will prove \eqref{Pmax}. A quick computation shows that $\max_{k \neq 0} |P_{m}(k)| \neq 0$ for $1 \leq m \leq 4$. Fix $m \geq 5$. The upper bound of \eqref{Pmax} follows directly from \cref{thm1}. For the lower bound, we use \eqref{P} and \cref{lma6} to get that
\[ P_{m}\left( \left\lfloor \frac{2^{m}}{3} \right\rceil \right) = 4C_{m-2}\left( \left\lfloor \frac{2^{m-1}}{3} \right\rceil \right), \]
and by \eqref{lower}, we have 
\[ 4C_{m-2}\left( \left\lfloor \frac{2^{m-1}}{3} \right\rceil \right) \gg \lambda^{m}, \]
so we are done.
\end{proof}

\section{Connections to Joint Spectral Radius Theory}

Let $\mathcal{T}$ be a bounded set of matrices in $\mathbb{R}^{n \times n}$ and $\| \cdot \|$ be a matrix norm. For $m \geq 1$, let $\mathcal{M}^{m}$ denote the set of products of $m$ matrices in $\mathcal{T}$. We define the \emph{joint spectral radius of $\mathcal{T}$}, or $JSR(\mathcal{T})$, by
\[ \lim_{m \to \infty} \sup_{\Pi \in \mathcal{M}^{m}} \| \Pi \|^{1/m}.\]
This limit always exists and is independent of the matrix norm chosen. The case in which $\mathcal{T}$ consists of a single matrix $T$, we have Gelfand's formula:
\[ \lim_{m \to \infty} \| T^{m} \|^{1/m} = \rho(T) \]
where $\rho(T)$ denotes the spectral radius of $T$. The problem of proving \cref{thm1} is tantamount to proving that
\begin{equation}\label{jsrmamb}
    JSR(\lbrace MA, MB \rbrace) = \lambda .
\end{equation}
There are several popular algorithms for approximating and even exactly computing the joint spectral radius, some of which can be found in \cite{protasov, jungers}. A \emph{branch-and-bound} method of computing $JSR(\mathcal{T})$ consists of computing $\sup_{\Pi \in \mathcal{M}^{m}} \| \Pi \|$ for increasing $m$, eventually converging to $JSR(\mathcal{T})$. The approach taken in Section 2 of this paper can be considered a sort of branch-and-bound method of computing $JSR(\lbrace MA, MB \rbrace)$, although instead of considering finite products of matrices in $\lbrace MA, MB \rbrace$, we consider finite products of arbitrarily large powers of matrices in $\lbrace MA, MB \rbrace$. Equation \eqref{jsrmamb} tells us that 
\begin{equation}\label{niceprop}
    JSR(\lbrace MA, MB \rbrace) = \max \lbrace \rho(MA), \rho(MB) \rbrace.
\end{equation}
This property can be asserted for certain families of matrices, such as simultaneously upper triangularizable matrices, normal matrices, and symmetric  (see Section 2.3.2 in \cite{jungers}). To the best of our knowledge, $\lbrace MA, MB \rbrace$ is not in any family of matrices shown to guarantee the property \eqref{niceprop}. So, it is necessary to compute $JSR(\lbrace MA, MB \rbrace)$ by other means.
\\
\indent We will give a heuristic proof of \eqref{jsrmamb} using the ``invariant polytope algorithm" given in Section 2.1 of \cite{protasov}, a simple overview of which can be found Section 2.1 of \cite{protasov2}. First, we note that $\lbrace MA, MB \rbrace$ is an irreducible set of matrices, i.e. there is no proper subspace of $\mathbb{R}^{3}$ invariant under both $MA$ and $MB$. Irreducibility is convenient in the context of working with $JSR(\mathcal{T})$ as it implies that $JSR(\mathcal{T}) > 0$ and that $\mathcal{T}$ is nondefective, i.e. there exists $K > 0$ such that for all $m$, we have
\[ \sup \lbrace \| \Pi \| : \Pi \in \mathcal{M}^{m} \rbrace \leq K \cdot JSR(\mathcal{T})^{m}, \]
or equivalently that $\mathcal{T}$ admits an extremal norm, i.e. a vector norm $| \cdot |$ such that for all $x \in \mathbb{R}^{n}$ and $T \in \mathcal{T}$, we have 
\[ |Tx| \leq JSR(\mathcal{T})|x|. \]
On these topics, we refer the reader to \cite{jungers, wirth}. On top of these properties, irreducibility is a necessary assumption in order to use the algorithm in \cite{protasov}, which we now expound upon. We choose $MA$ as a candidate for a \emph{spectrum-maximizing product}, which generally speaking is a matrix $\Pi \in \mathcal{M}^{m}$ such that
\[ [\rho(\Pi)]^{1/m} = JSR(\mathcal{T}).\]
Using \texttt{ConvexHull} in the \texttt{SciPy} library for \texttt{Python}, we find that the invariant polytope algorithm halts in 8 steps, after which we have found a polytope $\mathcal{P}$ with 30 vertices such that
\begin{equation}\label{invar}
    (MA) \mathcal{P} \cup (MB)\mathcal{P} \subset \lambda \mathcal{P}.
\end{equation}
Of course, this is not a formal proof, and it is also possible that this output was due to numerical imprecision. The polytope $\mathcal{P}$ in $\eqref{invar}$ would be the unit ball of an extremal norm associated with $\lbrace MA, MB \rbrace$. If \eqref{invar} is true, it would tell us that $MA$ is indeed a spectrum-maximizing product, which would prove \eqref{jsrmamb}. Going through 8 steps of this algorithm requires consideration of about the same number of cases as our method, being bounded above by $2^{8} = 256$. We believe our method to be useful for computation of the $JSR$ for sets of matrices consisting mostly of matrices $T$ with $T^{n} = T$ for some $n \in \mathbb{N}$. However, as our method was created around attacking \cref{thm1}, we have no evidence of its ability to compute joint spectral radii in general, unlike the algorithm in \cite{protasov}. It is purely luck that the product of $(MA)^{j_{n}}(MB)^{k_{n}}$ in \cref{lma4} did not exceed a length of 2.

\section{Directions for Further Study}

We now concern ourselves with which $k$ gives maximal $C_{m}(k)$. Fix $m$ and suppose that $k = k_{m}^{\ast}$ gives maximal $C_{m}(k)$.
\\
\\
\noindent \textbf{Conjecture.} We have that $k_{m}^{\ast}$ is unique for each $m$ and $\displaystyle\lim_{m \to \infty} \frac{3k_{m}^{\ast}}{2^{m+1}} = 1.$
\\
\\
In other words, the $k$ that gives maximal autocorrelation of the $m$-th Rudin-Shapiro sequence is asymptotically 2/3 the length of the $m$-th Rudin-Shapiro sequence.
\\
\\
Let $\ell_{m} = \left\lfloor \frac{2^{m+1}}{3} \right\rceil$
where $\left\lfloor x \right\rceil$ denotes the nearest integer to $x$. We find that $k_{m}^{\ast}$ is unique for each $3 \leq m \leq 16$ and that:
\begin{center}
\begin{tabular}{ | m{5em} | m{5em} | m{5em} | m{11em} | } 
  \hline
  $m$ & $|k_{m}^{\ast} - \ell_{m}|$ & $k_{m}^{\ast}/\ell_{m}$ \\ 
  \hline
  3 & 2 & 0.6 \\ 
  \hline
  4 & 0 & 1 \\ 
  \hline
  5 & 8 & 0.619... \\ 
  \hline
  6 & 0 & 1 \\ 
  \hline
  7 & 34 & 0.6 \\ 
  \hline
  8 & 2 & 1.011... \\ 
  \hline
  9 & 22 & 1.064... \\ 
  \hline
  10 & 8 & 1.011... \\ 
  \hline
  11 & 0 & 1 \\ 
  \hline
  12 & 34 & 1.012... \\ 
  \hline
  13 & 86 & 1.015... \\ 
  \hline
  14 & 136 & 1.012... \\ 
  \hline
  15 & 18 & 0.999... \\ 
  \hline
  16 & 0 & 1 \\
  \hline
\end{tabular}
\end{center}
\[ \]
Note that for $m = 4,6,11,16,\dots$ we have $k_{m}^{\ast} = \ell_{m}$. That leads us to pose the following question.
\\
\\
\textbf{Question.} Assuming that $k_{m}^{\ast}$ is unique for each $m$, do there exist infinitely many integers $m$ such that $k_{m}^{\ast} = \ell_{m}$?
\\
\\
It is interesting that $|C_{m}(k)|$ generally increases as $k$ increases from $k=0$ to a local maximum at $k \approx \frac{2^{m+1}}{5}$ and $|C_{m}(k)|$ generally decreases as $k$ increases from a local maximum at $k \approx \frac{2^{m+1}}{3}$ to $k=2^{m}$ as we can see in the following graphs:
\begin{align*}
  \centering
  \begin{minipage}[b]{0.5\textwidth}
    \includegraphics[width=\textwidth]{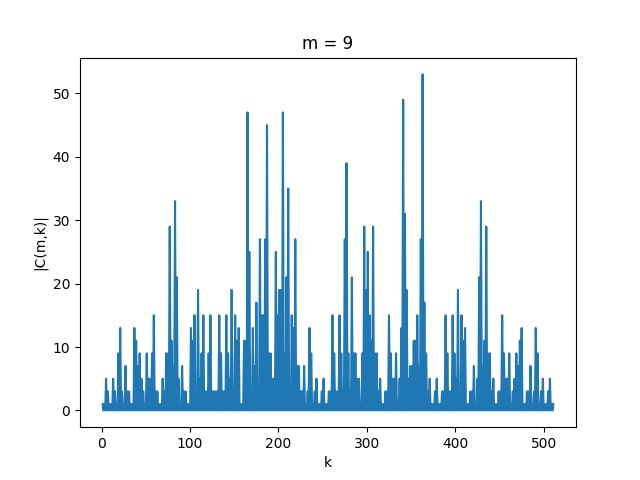}
  \end{minipage}
  \hfill
  \begin{minipage}[b]{0.5\textwidth}
    \includegraphics[width=\textwidth]{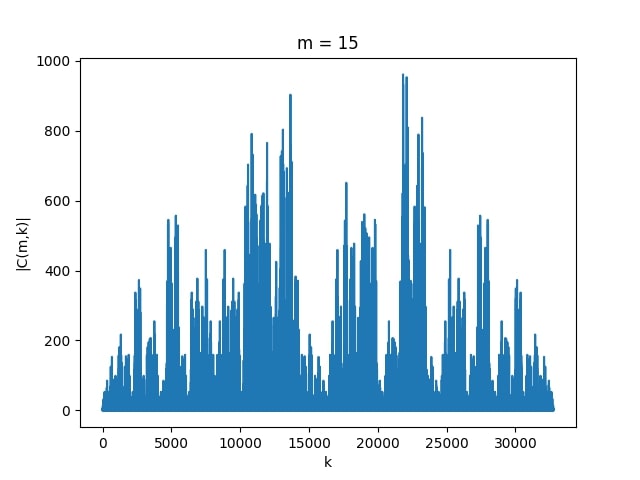}
  \end{minipage}
\end{align*}

\section*{Acknowledgements}

We would like to thank Stephen Choi for reviewing this paper and giving thorough feedback and guidance. We would also like to thank the anonymous referee and the corresponding editor for their attention to detail and substantial suggestions.

\nocite{*}
\bibliographystyle{elsarticle-num}
\bibliography{bib.bib}

\begin{thebibliography}{10}
\expandafter\ifx\csname url\endcsname\relax
  \def\url#1{\texttt{#1}}\fi
\expandafter\ifx\csname urlprefix\endcsname\relax\def\urlprefix{URL }\fi
\expandafter\ifx\csname href\endcsname\relax
  \def\href#1#2{#2} \def\path#1{#1}\fi

\bibitem{hoholdt}
T.~H{\o}holdt, H.~Jensen, J.~Justesen, Aperiodic correlations and the merit
  factor of a class of binary sequences (corresp.), IEEE Trans. Inform. Theory
  31~(4) (1985) 549--552.

\bibitem{mauduit}
C.~Mauduit, The {R}udin-{S}hapiro sequence, in: Substitutions in Dynamics,
  Arithmetics and Combinatorics, Vol. 1794 of Lecture Notes in Math., Springer,
  Berlin, 2002, pp. 41--47.

\bibitem{allouche2}
J.-P. Allouche, Finite automata and arithmetic, in: Sém. Lothar. Combin.
  [electronic only], Vol.~30, 1993.

\bibitem{allouche}
J.-P. Allouche, S.~Choi, A.~Denise, T.~Erd\'elyi, B.~Saffari, Bounds on
  autocorrelation coefficients of {R}udin-{S}hapiro polynomials, Anal. Math. 45
  (2019) 705--726.

\bibitem{katz}
D.~J. Katz, C.~M. van~der Linden, Peak sidelobe level and peak crosscorrelation
  of {G}olay–{R}udin–{S}hapiro sequences, IEEE Trans. Inform. Theory 68
  (2022) 3455--3473.

\bibitem{golay}
M.~Golay, Sieves for low autocorrelation binary sequences, IEEE Trans. Inform.
  Theory IT-23~(1) (1977) 43--51.

\bibitem{hoholdt2}
T.~H{\o}holdt, H.~Jensen, Determination of the merit factor of {L}egendre
  sequences, IEEE Trans. Inform. Theory 4~(1) (1988) 161--164.

\bibitem{newman}
{D.J. Newman}, {J.S. Byrnes}, The {$L^{4}$} norm of a polynomial with
  coefficients $\pm 1$, Amer. Math. Monthly 97 (1990) 42--45.

\bibitem{borwein}
P.~Borwein, M.~Mossinghoff, {R}udin-{S}hapiro-like polynomials in {$L^{4}$},
  Math. Comp. 69~(231) (2000) 1157--1166.

\bibitem{jedwab}
J.~Jedwab, A survery of the merit factor problem for binary sequences, in:
  Sequences and their Applications — SETA 2004, Vol. 3486 of Lecture Notes in
  Comput. Sci., Springer-Verlag, 2005, pp. 30--55.

\bibitem{littlewood}
{J.E. Littlewood}, Some problems in real and complex analysis, Heath
  Mathematical Monographs, Lexington, Massachusetts 37 (1968).

\bibitem{choi}
S.~Choi, Bounds on autocorrelation coefficients of {R}udin–{S}hapiro
  polynomials {II}, J. Approx. Theory 254 (2020) article 105390.

\bibitem{erdelyi}
T.~Erd{\'e}lyi, Recent progress in the study of polynomials with constrained
  coefficients, in: Trigonometric Sums and Their Applications, Springer, Cham,
  2020, pp. 29--69.

\bibitem{rota}
G.-C. Rota, G.~Strang, A note on the joint spectral radius, Indag. Math. (N.S.)
  63 (1960) 379--381.

\bibitem{protasov}
N.~Guglielmi, V.~Y. Protasov, Exact computation of joint spectral
  characteristics of linear operators, Found. Comput. Math. 13~(1) (2013)
  37--97.

\bibitem{welti}
G.~Welti, Quaternary codes for pulsed radar, IRE Trans. Inform. Theory 6 (1960)
  400--408.

\bibitem{welti2}
R.~Turyn, Ambiguity functions of complementary sequences (corresp.), IEEE
  Trans. Inform. Theory 9 (1963) 46--47.

\bibitem{jungers}
R.~Jungers, The joint spectral radius: {T}heory and applications, Vol. 385 of
  Lecture Notes in Control and Inform. Sci., Springer, Berlin, 2009.

\bibitem{protasov2}
N.~Guglielmi, V.~Y. Protasov, Invariant polytopes of sets of matrices with
  applications to regularity of wavelets and subdivisions, SIAM J. Matr. Anal.
  Appl. 37~(1) (2016) 18--52.

\bibitem{wirth}
F.~Wirth, The generalized spectral radius and extremal norms, Linear Algebra
  Appl. 342 (2000) 17--40.

\bibitem{katz2}
D.~J. Katz, S.~Lee, S.~A. Trunov, Crosscorrelation of {R}udin–{S}hapiro-like
  polynomials, Appl. Comput. Harmon. Anal. 48 (2020) 513–538.

\bibitem{saffari}
B.~Saffari, private communication.

\end{thebibliography}

\end{document}